\numberwithin{equation}{section}
\patchcmd{\ttlh@hang}{\parindent\z@}{\parindent\z@\leavevmode}{}{}
\patchcmd{\ttlh@hang}{\noindent}{}{}{}
\newcommand\numberthis{\addtocounter{equation}{1}\tag{\theequation}}
\theoremstyle{plain}
\newtheorem{theorem}{Theorem}[section]
\newtheorem{lemma}[theorem]{Lemma}
\newtheorem{proposition}[theorem]{Proposition}
\theoremstyle{definition}
\theoremstyle{remark}
\def\XXint#1#2#3{{\setbox0=\hbox{$#1{#2#3}{\int}$ }
\vcenter{\hbox{$#2#3$ }}\kern-.6\wd0}}
\DeclareMathOperator{\Span}{\overline{span}}
\DeclareMathOperator{\vol}{vol}
\DeclareMathOperator{\vN}{VN_{\sigma} (\Gamma)}
\DeclareMathOperator{\VN}{\vN}
\DeclareMathOperator{\id}{id}
\newcommand{\Hpi}{\mathcal{H}_{\pi}}
\newcommand{\Hil}{\mathcal{H}}
\newcommand{\cdim}{{\rm cdim}}
\newcommand{\ctr}{{\rm ctr}}
\newcommand{\cB}{\mathcal{B}}
\newcommand{\cZ}{\mathcal{Z}}
\title[Density conditions with stabilizers for lattice orbits of Bergman kernels]{Density conditions with stabilizers for lattice orbits of Bergman kernels on bounded symmetric domains}
\author{Martijn Caspers}
\author{Jordy Timo van Velthoven}
\address{Delft University of Technology,
Faculty EECMS/DIAM,
Mekelweg 4, Building 36,
2628 CD Delft, The Netherlands.}
\email{m.p.t.caspers@tudelft.nl}
\email{j.t.vanvelthoven@tudelft.nl}
\subjclass[2020]{22D25, 22E46, 32A36, 42C30, 42C40, 46L10}
\keywords{Bergman space, Cyclic vector, Discrete series representation, Frame, Lattice, Riesz sequence, Separating vector, Twisted von Neumann algebra.}
\begin{document}

\maketitle

\begin{abstract}
Let $\pi_{\alpha}$ be a holomorphic discrete series representation of a connected semi-simple Lie group $G$ with finite center, acting on a weighted Bergman space $A^2_{\alpha} (\Omega)$ on a bounded symmetric domain $\Omega$, of formal dimension $d_{\pi_{\alpha}} > 0$. It is shown that if the Bergman kernel $k^{(\alpha)}_z$ is a cyclic vector for the restriction $\pi_{\alpha} |_{\Gamma}$ to a lattice $\Gamma \leq G$ (resp. $(\pi_{\alpha} (\gamma) k^{(\alpha)}_z)_{\gamma \in \Gamma}$ is a frame for $A^2_{\alpha}(\Omega)$),
then $\vol(G/\Gamma) d_{\pi_{\alpha}} \leq |\Gamma_z|^{-1}$. The estimate $\vol(G/\Gamma) d_{\pi_{\alpha}} \geq |\Gamma_z|^{-1}$ holds for $k^{(\alpha)}_z$ being a $p_z$-separating vector (resp. $(\pi_{\alpha} (\gamma) k^{(\alpha)}_z)_{\gamma \in \Gamma / \Gamma_z}$ being a Riesz sequence in $A^2_{\alpha} (\Omega)$).
These estimates improve on general density theorems for restricted discrete series through the dependence on the stabilizers, while recovering in part sharp results for $G = \mathrm{PSU}(1, 1)$.
\end{abstract}

\section{Introduction}
Let $\Omega \subset \mathbb{C}^d$ be a bounded symmetric domain,
realized as a symmetric space $G/K$ of a semi-simple Lie group $G$
with trivial center and a maximal compact subgroup $K \leq G$.
For $\alpha > d' - 1$, with $d'$ being the genus of $\Omega$,
the associated weighted Bergman space $ A^2_{\alpha}  :=  A^2_{\alpha} (\Omega)$ is the closed subspace of holomorphic functions in the weighted $L^2$-space
\[
 L^2 (\Omega, \mu_{\alpha}) := \bigg\{ f : \Omega \to \mathbb{C} \; : \; \int_{\Omega} | f(w) |^2 \; d\mu_{\alpha} (w) < \infty  \bigg\},
\]
where $\mu_{\alpha}$ is a quasi-invariant probability measure on $\Omega$, cf. Section \ref{sec:holomorphic_discrete} for precise definitions.
It is a reproducing kernel Hilbert space, whose kernel will be denoted by
\[
 k^{(\alpha)}  : \Omega \times \Omega \to \mathbb{C}, \quad \text{and} \quad k^{(\alpha)}_z (w) :=  \overline{k^{(\alpha)} (z,w)}, \quad w,z \in \Omega.
\]
The group $G$ acts unitarily on $A^2_{\alpha} (\Omega)$ via the holomorphic discrete series, given by
\begin{align} \label{eq:jacobian_intro}
\pi_{\alpha} (g) f (w) = J_{g^{-1}} (w)^{\alpha / d'} f (g^{-1} \cdot w), \quad g \in G, \; w \in \Omega,
\end{align}
where $J_{g^{-1}}$ denotes the complex Jacobian of the action of $g^{-1}$ on $\Omega$. If $\alpha /d'$ is non-integral,
then the action $\pi_{\alpha}$ determines only a projective unitary representation on $A^2_{\alpha} (\Omega)$
due to the ambiguity of the power $J_{g^{-1}}( w)^{\alpha / d'}$ in \eqref{eq:jacobian_intro}. For any $\alpha > d' - 1$,
the (projective) representation $\pi_{\alpha}$ is irreducible and square-integrable; its formal dimension will be denoted by $d_{\pi_{\alpha}} > 0$.

\subsection{Density conditions} The orthogonality relations for discrete series $(\pi_{\alpha}, A^2_{\alpha})$ yield that
\begin{align} \label{eq:ortho_intro}
\int_G |\langle f, \pi_{\alpha} (x) \eta \rangle |^2 \; d\mu_G (x) = d_{\pi_{\alpha}}^{-1} \| f \|_{A^2_{\alpha}}^2 \| \eta \|_{A^2_{\alpha}}^2 \quad \text{for all} \quad f, \eta \in A^2_{\alpha} (\Omega).
\end{align}
In particular, the identity \eqref{eq:ortho_intro} implies that any orbit $\pi_{\alpha} (G) \eta$ of a non-zero $\eta \in A^2_{\alpha} (\Omega) $ is overcomplete, i.e., it contains proper subsystems that are still complete. Conditions on a discrete index set $\Lambda \subset G$ under which a subsystem $\pi_{\alpha}(\Lambda) \eta$ remains complete in $A^2_{\alpha} (\Omega)$ are generally referred to as \emph{density conditions} and have been studied in complex and harmonic analysis, mathematical physics and representation theory, see, e.g., \cite{bekka2004square, romero2020density, enstad2021density, seip1993beurling, kellylyth1999uniform, monastyrskij1974coherent, perelomov1972coherent, jones2020bergman}.

A prototypical example of a density condition is given by the following theorem, which is a special case of a general result
for unimodular groups, cf. \cite{kuhn1992restrictions, atiyah1976elliptic, atiyah1977geometric, goodman1989coxeter, bekka2004square}.

\begin{theorem}[Density theorem] \label{thm:density_intro}
Let $\Gamma \leq G$ be a lattice with (finite) co-volume $\vol(G/\Gamma)$.
\begin{enumerate}[(i)]
\item If there exists a cyclic vector $\eta \in A^2_{\alpha} (\Omega)$ for $\pi_{\alpha} |_{\Gamma}$, then $\vol(G/\Gamma) d_{\pi_{\alpha}} \leq 1$.
\item If there exists a separating vector $\eta \in A^2_{\alpha} (\Omega)$ for $\pi_{\alpha} |_{\Gamma}$, then $\vol(G/\Gamma) d_{\pi_{\alpha}} \geq 1$.
\end{enumerate}
(The quantity $\vol(G/\Gamma) d_{\pi_{\alpha}}$ is independent of the choice of Haar measure on $G$.)
\end{theorem}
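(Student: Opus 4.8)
The plan is to recast both statements in the language of finite von Neumann algebras and their coupling constants, following the strategy of Atiyah--Schmid and Bekka. Since $\pi_{\alpha}$ is in general only a projective unitary representation, with an associated $2$-cocycle $\sigma$, the natural object is the twisted group von Neumann algebra $\VN$ equipped with its canonical faithful normal trace $\tau$, normalized so that $\tau$ evaluates to $1$ on the unit. The first step is to observe that the restriction $\pi_{\alpha}|_{\Gamma}$ is itself a $\sigma$-projective unitary representation of $\Gamma$ and extends to a normal unital $*$-representation of $\VN$ on $A^2_{\alpha}(\Omega)$, turning $A^2_{\alpha}(\Omega)$ into a left Hilbert module over $\VN$. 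Here one uses that $G$, being semi-simple, is unimodular, so that $\VN$ is finite and the trace $\tau$ is well defined, together with square-integrability of $\pi_{\alpha}$. Under this module structure, a cyclic (resp. separating) vector for $\pi_{\alpha}|_{\Gamma}$ is exactly a cyclic (resp. separating) vector for $\VN$ acting on $A^2_{\alpha}(\Omega)$, so the whole theorem reduces to a statement about the von Neumann dimension of this module.

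The heart of the argument is then the coupling-constant computation
\[
\dim_{\VN}\bigl(A^2_{\alpha}(\Omega)\bigr) = \vol(G/\Gamma)\, d_{\pi_{\alpha}}.
\]
To obtain this I would fix a Bessel vector $\eta$ and form the bounded analysis operator $C_{\eta} : A^2_{\alpha}(\Omega) \to \ell^2(\Gamma)$, $C_{\eta} f = (\langle f, \pi_{\alpha}(\gamma)\eta\rangle)_{\gamma \in \Gamma}$, which intertwines $\pi_{\alpha}|_{\Gamma}$ with the left $\sigma$-regular representation defining $\VN$. The orthogonality relations \eqref{eq:ortho_intro} provide the continuous analogue $\int_G |\langle f, \pi_{\alpha}(x)\eta\rangle|^2 \, d\mu_G(x) = d_{\pi_{\alpha}}^{-1}\|f\|^2\|\eta\|^2$; integrating this identity over a fundamental domain for $\Gamma$ in $G$ transports the Plancherel computation from $G$ to $\Gamma$ and expresses the trace of the associated frame operator $C_{\eta}^{*}C_{\eta}\in\pi_{\alpha}(\Gamma)'$ in terms of $d_{\pi_{\alpha}}^{-1}$ and $\vol(G/\Gamma)$. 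Tracking the normalization of $\tau$ through this identification yields the displayed formula for the coupling constant.

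With the dimension formula in hand, the theorem follows from the standard dictionary between generating vectors and the coupling constant of a finite von Neumann algebra $M$ acting on a Hilbert space $H$: there exists a cyclic vector for $M$ if and only if $\dim_M(H) \leq 1$, and a separating vector exists if and only if $\dim_M(H) \geq 1$. (The second equivalence follows from the first applied to the commutant, using that cyclicity for $M$ is separation for $M'$ and that $\dim_{M'}(H) = \dim_M(H)^{-1}$.) Applying this to $M = \VN$ and $H = A^2_{\alpha}(\Omega)$, a cyclic vector for $\pi_{\alpha}|_{\Gamma}$ forces $\vol(G/\Gamma)\, d_{\pi_{\alpha}} \leq 1$, giving (i), while a separating vector forces $\vol(G/\Gamma)\, d_{\pi_{\alpha}} \geq 1$, giving (ii). The asserted independence of $\vol(G/\Gamma)\, d_{\pi_{\alpha}}$ from the Haar normalization is automatic, since rescaling $\mu_G$ scales $d_{\pi_{\alpha}}$ inversely to $\vol(G/\Gamma)$.

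I expect the main obstacle to be the second step, namely the justification of the coupling-constant formula in the twisted setting. One must verify that $\pi_{\alpha}|_{\Gamma}$ genuinely defines a \emph{normal} $\VN$-module rather than merely a projective representation of $\Gamma$, that the cocycle $\sigma$ arising from the ambiguity in $J_{g^{-1}}(w)^{\alpha/d'}$ matches the one defining $\VN$, and that the fundamental-domain integration of \eqref{eq:ortho_intro} is carried out with the correct normalization so that $d_{\pi_{\alpha}}$ appears paired with $\vol(G/\Gamma)$ exactly as claimed. The module-theoretic reduction and the cyclic/separating dictionary are comparatively routine once this dimension identity is in place.
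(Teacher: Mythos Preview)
Your proposal is correct and follows exactly the approach the paper itself indicates: the paper does not prove Theorem~\ref{thm:density_intro} in detail but presents it as a well-known consequence of the Atiyah--Schmid formula $\dim_{\VN}(A^2_{\alpha}) = \vol(G/\Gamma)\, d_{\pi_{\alpha}}$ combined with the standard cyclic/separating dictionary for the coupling constant (stated later as Theorem~\ref{thm:coupling}), citing \cite{atiyah1976elliptic, atiyah1977geometric, bekka2004square, romero2020density} for the details you sketch.
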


Theorem \ref{thm:density_intro} is a well-known consequence of the coupling and dimension theory for von Neumann algebras and the corresponding \emph{Atiyah-Schmid formula} \cite{atiyah1976elliptic, atiyah1977geometric} for the (scalar-valued) dimension
\[
 \dim_{\VN} (A^2_{\alpha}) = \vol(G/\Gamma) d_{\pi_{\alpha}}
\]
of $A^2_{\alpha}(\Omega)$ as a Hilbert module over the (twisted) group von Neumann algebra $\VN$.

For an elementary proof of Theorem \ref{thm:density_intro} based on frame and representation theory, cf. \cite{romero2020density}.

The density conditions provided by Theorem \ref{thm:density_intro} are sharp in the sense that for any lattice $\Gamma \leq G$ satisfying $\vol(G/\Gamma) d_{\pi_{\alpha}} \leq 1$ (resp. $\vol(G/\Gamma) d_{\pi_{\alpha}} \geq 1$) there exists $\eta \in A^2_{\alpha} (\Omega)$ being a cyclic vector (resp. separating vector). Converse statements of this type can also be found in (or deduced from) the aforementioned papers \cite{kuhn1992restrictions, atiyah1976elliptic, atiyah1977geometric, goodman1989coxeter, bekka2004square, romero2020density}.

\subsection{Coherent states}
The density conditions provided by Theorem \ref{thm:density_intro} are generally not optimal for the cyclicity or separateness of one specific vector.
For example, for the group $G = \mathrm{PSU}(1,1) = \mathrm{SU}(1,1) / \{-I, I \}$ acting on the unit disk $\mathbb{D} \subset \mathbb{C}^1$ and $\eta = k^{(\alpha)}_z$ being the Bergman kernel function of a fixed $z \in \mathbb{D}$, the following density conditions involving the (finite) stabilizer $\Gamma_z := \{ \gamma \in \Gamma : \gamma \cdot z = z \}$ of a lattice $\Gamma \leq G$ were shown in \cite{perelomov1974coherent, kellylyth1999uniform}:
\[
\vol(G/\Gamma) d_{\pi_{\alpha}} < |\Gamma_z|^{-1} \quad \Longrightarrow \quad \Span \pi_{\alpha} (\Gamma) k_z^{(\alpha)} = A^2_{\alpha} (\mathbb{D}) \quad \Longrightarrow \quad \vol(G/\Gamma) d_{\pi_{\alpha}} \leq |\Gamma_z|^{-1}.
\]
In fact, it was recently shown in \cite{jones2020bergman} that $\vol(G/\Gamma) d_{\pi_{\alpha}} = |\Gamma_z|^{-1}$ is also sufficient for $\pi_{\alpha} (\Gamma) k^{(\alpha)}_z$ to be complete in $A^2_{\alpha} (\mathbb{D})$, so that the range $\vol(G/\Gamma) d_{\pi_{\alpha}} \leq |\Gamma_z|^{-1}$ provides a characterization of the completeness of the system $\pi_{\alpha} (\Gamma) k^{(\alpha)}_z$ in $A^2_{\alpha} (\mathbb{D})$.

An orbit $\pi_{\alpha} (G) k_{z}^{(\alpha)}$ of a Bergman kernel function $k_{z}^{(\alpha)}$ is sometimes referred to as a \emph{coherent state system}, see, e.g., \cite{monastyrskij1974coherent, perelomov1972coherent, perelomov1986generalized, neeb1996coherent, rawnsley1977coherent, lisiecki1990kaehler, perelomov1975coherent}.
The aim of this paper is to derive density conditions on a lattice $\Gamma \leq G$ for associated subsystems $\pi_{\alpha} (\Gamma) k_z^{(\alpha)}$ to be complete.

The first main result obtained is the following theorem.

\begin{theorem} \label{thm:main1_intro} Let $\Gamma \leq G$ be a lattice and let $\Gamma_z := \{ \gamma \in \Gamma : \gamma \cdot z = z \}$ be the (finite) stabilizer of a point $z \in \Omega$.
For arbitrary $\alpha > d' - 1$,
\begin{enumerate}[(i)]
\item
If $k^{(\alpha)}_z \in A^2_{\alpha} (\Omega)$ is a cyclic vector for $\pi_{\alpha} |_{\Gamma}$, then $\vol(G/\Gamma) d_{\pi_{\alpha}} \leq |\Gamma_z|^{-1}$.
\item If $k^{(\alpha)}_z \in A^2_{\alpha} (\Omega)$ is a $p_z$-separating vector for $\pi_{\alpha} |_{\Gamma}$, then $\vol(G/\Gamma) d_{\pi_{\alpha}} \geq |\Gamma_z|^{-1}$.
\end{enumerate}
\end{theorem}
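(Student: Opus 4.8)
The plan is to exploit that the Bergman kernel $k^{(\alpha)}_z$ is not an arbitrary cyclic/separating vector but an \emph{eigenvector} for the finite stabilizer $\Gamma_z$, and to compare $A^2_\alpha(\Omega)$ with a reduced Hilbert module whose von Neumann dimension is exactly $|\Gamma_z|^{-1}$. First I would record the covariance of the kernel: from the reproducing property $f(z) = \langle f, k^{(\alpha)}_z\rangle$ together with \eqref{eq:jacobian_intro} one computes $\pi_\alpha(g)k^{(\alpha)}_z = \overline{J_g(z)^{\alpha/d'}}\,k^{(\alpha)}_{g\cdot z}$. Restricting to $\gamma \in \Gamma_z$, where $\gamma\cdot z = z$, this collapses to $\pi_\alpha(\gamma)k^{(\alpha)}_z = \chi(\gamma)\,k^{(\alpha)}_z$ with $\chi(\gamma) := \overline{J_\gamma(z)^{\alpha/d'}}$; unitarity forces $|\chi(\gamma)| = 1$, and the projectivity of $\pi_\alpha$ makes $\chi \colon \Gamma_z \to \mathbb{T}$ a $\sigma|_{\Gamma_z \times \Gamma_z}$-projective character. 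Thus $k^{(\alpha)}_z$ spans a $\Gamma_z$-eigenline.

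Next I would pass to the twisted group von Neumann algebra $\VN$, with canonical trace $\tau$ and generating unitaries $\lambda_\sigma(\gamma)$, and set
\[
 p_z := \frac{1}{|\Gamma_z|}\sum_{\gamma \in \Gamma_z} \overline{\chi(\gamma)}\,\lambda_\sigma(\gamma).
\]
Using $\lambda_\sigma(\gamma)\lambda_\sigma(\gamma') = \sigma(\gamma,\gamma')\lambda_\sigma(\gamma\gamma')$ and the cocycle identity for $\chi$, one checks $p_z = p_z^* = p_z^2$, while $\tau(\lambda_\sigma(\gamma)) = \delta_{\gamma,e}$ gives the crucial value $\tau(p_z) = |\Gamma_z|^{-1}$. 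Feeding the eigenvalue relation into the representation yields $\pi_\alpha(p_z)k^{(\alpha)}_z = k^{(\alpha)}_z$, so $k^{(\alpha)}_z$ lies in the range of the reduced projection. I would then realize $A^2_\alpha(\Omega)$ as a left Hilbert module over $\mathcal{M} := \pi_\alpha(\Gamma)'' \cong \VN$ and invoke the Atiyah--Schmid identification $\dim_{\VN}(A^2_\alpha) = \vol(G/\Gamma)\,d_{\pi_\alpha}$ already used after Theorem \ref{thm:density_intro}. The pivot is the module map $\Psi \colon \mathcal{M}p_z \to A^2_\alpha(\Omega)$, $y \mapsto y\cdot k^{(\alpha)}_z$, which is well defined since $k^{(\alpha)}_z = \pi_\alpha(p_z)k^{(\alpha)}_z$ and which intertwines the left $\mathcal{M}$-actions on $L^2(\mathcal{M})p_z$ and on $A^2_\alpha(\Omega)$; here $\dim_{\VN}(L^2(\mathcal{M})p_z) = \tau(p_z) = |\Gamma_z|^{-1}$. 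For (i), cyclicity of $k^{(\alpha)}_z$ says $\Psi$ has dense range, exhibiting $A^2_\alpha(\Omega)$ as a quotient of $L^2(\mathcal{M})p_z$, and monotonicity of von Neumann dimension gives $\vol(G/\Gamma)d_{\pi_\alpha} \leq \tau(p_z) = |\Gamma_z|^{-1}$. For (ii), the $p_z$-separating hypothesis says precisely that $\Psi$ is injective (bounded below), so $L^2(\mathcal{M})p_z$ embeds as a submodule of $A^2_\alpha(\Omega)$ and the same monotonicity gives $\vol(G/\Gamma)d_{\pi_\alpha} \geq |\Gamma_z|^{-1}$.

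The main obstacle is functional-analytic rather than algebraic: the map $\Psi$ need not be bounded, since a single orbit $(\pi_\alpha(\gamma)k^{(\alpha)}_z)_{\gamma\in\Gamma}$ is a Bessel system only after one controls the summability of the matrix coefficients $\gamma \mapsto \langle k^{(\alpha)}_z, \pi_\alpha(\gamma)k^{(\alpha)}_z\rangle$ over the lattice. I would therefore treat $\Psi$ as a densely defined closed $\mathcal{M}$-linear operator and argue at the level of its polar decomposition $\Psi = V|\Psi|$, whose partial isometry $V$ is a genuine module map, so that the two dimension comparisons survive for the closed extension; this is exactly the device by which the elementary proof of Theorem \ref{thm:density_intro} in \cite{romero2020density} dispenses with the Bessel hypothesis, and the only new ingredient is to carry the projection $p_z$ through those estimates. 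The bounded, quantitative statements advertised in the abstract then drop out of the same scheme: $(\pi_\alpha(\gamma)k^{(\alpha)}_z)_{\gamma\in\Gamma}$ being a frame, resp. $(\pi_\alpha(\gamma)k^{(\alpha)}_z)_{\gamma\in\Gamma/\Gamma_z}$ being a Riesz sequence, correspond to $\Psi$ being boundedly surjective, resp. bounded below, and the reindexing from $\Gamma$ to $\Gamma/\Gamma_z$ is precisely the concrete shadow of replacing $\mathcal{M}$ by its reduction $\mathcal{M}p_z$.
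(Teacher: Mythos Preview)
Your proposal is correct and follows essentially the same strategy as the paper: both identify the eigenvector property $\pi_\alpha(\gamma)k^{(\alpha)}_z = u(\gamma)k^{(\alpha)}_z$ for $\gamma\in\Gamma_z$, build the projection $p_z$ of trace $|\Gamma_z|^{-1}$, and compare the von Neumann dimension of $A^2_\alpha$ (given by the Atiyah--Schmid formula) against that of the reduced module $L^2(\VN)p_z$. The only difference is presentational: the paper packages the two dimension inequalities into abstract statements (Propositions~\ref{Prop=DimEstimate} and~\ref{prop:dimension_pseparating}) proved via support projections and the modular conjugation $J$, whereas you phrase them through the explicit module map $\Psi\colon y\mapsto y\cdot k^{(\alpha)}_z$ and its polar decomposition; these are equivalent devices. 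One minor point: your formula for $\chi(\gamma)$ omits the cocycle factor $\sigma_\alpha(\gamma,\gamma^{-1})$ that appears in the paper's $u(\gamma)$ (see \eqref{Eqn=Fixed}), but since this is unimodular it does not affect the argument.
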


The above mentioned results of \cite{perelomov1974coherent, kellylyth1999uniform, jones2020bergman} for $G = \mathrm{PSU}(1,1)$ show that assertion (i) of Theorem \ref{thm:main1_intro} is sharp, in the sense that the upper bound $|\Gamma_z|^{-1}$ for $\vol(G/\Gamma) d_{\pi_{\alpha}}$ cannot be replaced by a smaller value. In addition, this estimate is stronger than the automorphic weight bound for the completeness of coherent state subsystems obtained in \cite{monastyrskij1974coherent}, which is only valid conditional the existence of adequate modular forms
and is weaker than the density condition provided by Theorem \ref{thm:density_intro} (hence, the one of Theorem \ref{thm:main1_intro}) in the case of $G = \mathrm{PSU}(1,1)$, cf. \cite[Lemma 5.3]{kellylyth1999uniform} and \cite[Section 9.1.1]{romero2020density} for a detailed discussion.

In assertion (ii) of Theorem \ref{thm:main1_intro}, the vector $k_z^{(\alpha)}$ being a $p_z$-separating vector for $\pi_{\alpha} |_{\Gamma}$ means that it is a separating vector for the reduced von Neumann algebra $p_z \pi_{\alpha} (\Gamma)'' p_z$ for a projection $p_z \in \pi_{\alpha} (\Gamma)''$ determined by the stabilizer $\Gamma_z$. In the particular case of a trivial stabilizer $\Gamma_z$, the projection $p_z$ equals the identity operator $\id_{\Hpi}$, and assertion (ii) recovers the corresponding estimate of Theorem \ref{thm:main1_intro}.
The use of $p_z$-separating vectors is an essential ingredient in the main result on Riesz sequences (see Theorem \ref{thm:main2_intro}). Assertion (ii) seems to be new even for the special case of $G = \mathrm{PSU}(1,1)$.

For the proof of Theorem \ref{thm:main1_intro}, the coupling and dimension theory for von Neumann algebras is used. In particular, the technique for obtaining part (i) resembles the recent approach of \cite{jones2020bergman} for $G = \mathrm{PSU}(1,1)$ in which the von Neumann dimension $\dim_{\VN}(\VN k_z^{(\alpha)})$ of the Hilbert submodule $\VN k_z^{(\alpha)}$ is compared to the von Neumann dimension $\dim_{\VN} (A^2_{\alpha})$ of the Bergman space $A^2_{\alpha} (\Omega)$ itself.
This comparison is the key to improve the conditions of Theorem \ref{thm:density_intro} for one specific vector. It should be mentioned that for obtaining part (i) of Theorem \ref{thm:main1_intro} several technical modifications are required compared to \cite{jones2020bergman}, where certain ad hoc arguments for $\mathrm{PSU}(1,1)$ are used, such as cyclicity of the stabilizer subgroups.
The proof of assertion (ii) follows an approach dual to that of assertion (i).

\subsection{Frames and Riesz sequences} While the notions of cyclic and separating vectors are common in operator algebras and representation theory, the (stronger) notions of frames (resp. sampling sets) and Riesz sequences (resp. interpolation sets) are more common in complex and harmonic analysis, see, e.g., \cite{seip2004interpolation, young2001introduction}.  The system $\pi_{\alpha} (\Gamma) k_{z}^{(\alpha)}$ is called a
\emph{frame} for $A^2_{\alpha} (\Omega)$ if
\begin{align} \label{eq:frame_intro}
 A \| f \|_{A^2_{\alpha}}^2 \leq \sum_{\gamma \in \Gamma} |\langle f, \pi_{\alpha} (\gamma) k_z^{(\alpha)} \rangle |^2 \leq B \| f \|_{A^2_{\alpha}}^2 \quad \text{for all} \quad f \in A^2_{\alpha} (\Omega)
\end{align}
for some constants $0<A\leq B<\infty$, and it is called a \emph{Riesz sequence} in $A^2_{\alpha} (\Omega)$ if
\begin{align} \label{eq:riesz_intro}
 A \| c \|_{\ell^2}^2 \leq \bigg\| \sum_{\gamma \in \Gamma} c_{\gamma} \pi_{\alpha} (\gamma) k_z^{(\alpha)} \bigg\|_{A^2_{\alpha}}^2 \leq B \| c \|_{\ell^2}^2 \quad \text{for all} \quad c = (c_{\gamma} )_{\gamma \in \Gamma} \in \ell^2 (\Gamma).
\end{align}
The lower bound in \eqref{eq:frame_intro} (resp. \eqref{eq:riesz_intro}) implies, in particular, that a frame (resp. Riesz sequence) $\pi_{\alpha} (\Gamma) k_{z}^{(\alpha)}$ is complete (resp. linear independent) in $A^2_{\alpha} (\Omega)$.
For the lattice orbit $\pi_{\alpha} (\Gamma) k_{z}^{(\alpha)}$ to be a Riesz sequence,
it is therefore necessary that the stabilizer $\Gamma_z$ is trivial; otherwise, only the reduced system $(\pi_{\alpha} (\gamma) k_{z}^{(\alpha)})_{\gamma \in \Lambda_z}$, where $\Lambda_z$ is a set of representatives of $\Gamma / \Gamma_z$, can be expected to satisfy the corresponding Riesz inequalities \eqref{eq:riesz_intro}.

The following result is obtained as a consequence of Theorem \ref{thm:main1_intro}.

\begin{theorem} \label{thm:main2_intro} Let $\Gamma \leq G$ be a lattice and let $\Gamma_z := \{ \gamma \in \Gamma : \gamma \cdot z = z \}$ be the (finite) stabilizer of a point $z \in \Omega$. Let $\Lambda_z \subseteq \Gamma$ be a set of representatives of $\Gamma / \Gamma_z$.
For arbitrary $\alpha > d' - 1$,

\begin{enumerate}[(i)]
\item
If $(\pi_{\alpha} (\gamma) k^{(\alpha)}_z)_{\gamma \in \Gamma}$ is a frame for $A^2_{\alpha} (\Omega)$, then $\vol(G/\Gamma) d_{\pi_{\alpha}} \leq |\Gamma_z|^{-1}$.
\item If $(\pi_{\alpha} (\gamma) k^{(\alpha)}_z)_{\gamma \in \Lambda_z}$ is a Riesz sequence in $A^2_{\alpha} (\Omega)$, then $\vol(G/\Gamma) d_{\pi_{\alpha}} \geq |\Gamma_z|^{-1}$.
\end{enumerate}
\end{theorem}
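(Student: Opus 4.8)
The plan is to deduce Theorem~\ref{thm:main2_intro} from Theorem~\ref{thm:main1_intro} by showing that the frame property implies cyclicity and the Riesz-sequence property implies the relevant separating condition. The crucial observation is that the frame and Riesz inequalities are \emph{quantitative} strengthenings of the completeness and linear-independence properties that are encoded in cyclic and $p_z$-separating vectors, so the task reduces to verifying that these quantitative hypotheses imply the qualitative ones required as input to Theorem~\ref{thm:main1_intro}.

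\textbf{Part (i).} First I would observe that the lower frame bound in \eqref{eq:frame_intro} forces the system $(\pi_{\alpha}(\gamma) k_z^{(\alpha)})_{\gamma \in \Gamma}$ to be complete: if some nonzero $f \in A^2_{\alpha}(\Omega)$ were orthogonal to every $\pi_{\alpha}(\gamma) k_z^{(\alpha)}$, the middle sum would vanish while $\| f \|_{A^2_{\alpha}}^2 > 0$, contradicting $A > 0$. Completeness of the orbit means precisely that $\overline{\Span} \pi_{\alpha}(\Gamma) k_z^{(\alpha)} = A^2_{\alpha}(\Omega)$, i.e., $k_z^{(\alpha)}$ is a cyclic vector for $\pi_{\alpha}|_{\Gamma}$. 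Applying assertion (i) of Theorem~\ref{thm:main1_intro} then yields $\vol(G/\Gamma) d_{\pi_{\alpha}} \leq |\Gamma_z|^{-1}$. This direction is essentially immediate once one notes that frames are in particular complete systems, so I expect no real obstacle here beyond stating the implication carefully.

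\textbf{Part (ii).} This is the more delicate direction, and the main obstacle is to translate the Riesz inequality \eqref{eq:riesz_intro} for the reduced system $(\pi_{\alpha}(\gamma) k_z^{(\alpha)})_{\gamma \in \Lambda_z}$ into the statement that $k_z^{(\alpha)}$ is a $p_z$-separating vector for $\pi_{\alpha}|_{\Gamma}$, i.e., a separating vector for the reduced algebra $p_z \pi_{\alpha}(\Gamma)'' p_z$, where $p_z \in \pi_{\alpha}(\Gamma)''$ is the projection determined by $\Gamma_z$. The lower Riesz bound guarantees that the coefficient map $c \mapsto \sum_{\gamma \in \Lambda_z} c_{\gamma} \pi_{\alpha}(\gamma) k_z^{(\alpha)}$ is injective on $\ell^2(\Lambda_z)$, so the vectors $(\pi_{\alpha}(\gamma) k_z^{(\alpha)})_{\gamma \in \Lambda_z}$ are $\ell^2$-linearly independent. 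I would argue that this linear independence, combined with the role of $p_z$ as averaging over the stabilizer $\Gamma_z$, is exactly the condition that no nonzero element of $p_z \pi_{\alpha}(\Gamma)'' p_z$ annihilates the cyclic vector $p_z k_z^{(\alpha)}$: an operator $T$ in the reduced algebra with $T p_z k_z^{(\alpha)} = 0$ would produce a nontrivial $\ell^2$-relation among the orbit vectors on $\Lambda_z$, contradicting the lower Riesz bound. The careful point, which is where the passage to representatives $\Lambda_z$ and the specific definition of $p_z$ from the preceding sections must be invoked, is that the Riesz condition is stated modulo the stabilizer, and one must check that the induced map intertwines the $\ell^2(\Lambda_z)$ structure with the compression by $p_z$ correctly. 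Once the $p_z$-separating property is established, assertion (ii) of Theorem~\ref{thm:main1_intro} gives $\vol(G/\Gamma) d_{\pi_{\alpha}} \geq |\Gamma_z|^{-1}$, completing the proof.
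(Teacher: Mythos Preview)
Your strategy matches the paper's exactly: Part (i) is the trivial implication frame $\Rightarrow$ cyclic, and Part (ii) reduces to showing that the Riesz property of $(\pi_\alpha(\gamma)k_z^{(\alpha)})_{\gamma\in\Lambda_z}$ forces $k_z^{(\alpha)}$ to be $p_z$-separating, after which Theorem~\ref{thm:main1_intro}(ii) applies. Part (i) is complete as you wrote it.

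For Part (ii), however, there is a genuine gap. You assert that a nonzero $T\in \pi_\alpha(\Gamma)''p_z$ with $Tk_z^{(\alpha)}=0$ ``would produce a nontrivial $\ell^2$-relation among the orbit vectors on $\Lambda_z$'', but this is precisely the nontrivial step and your sketch does not supply it. An element of $\pi_\alpha(\Gamma)''$ is only an SOT-limit of finite linear combinations of the $\pi_\alpha(\gamma)$; there is no a priori reason its action on $k_z^{(\alpha)}$ should be given by an $\ell^2$-series $\sum_{\gamma}c_\gamma\pi_\alpha(\gamma)k_z^{(\alpha)}$ to which the lower Riesz bound can be applied. The paper isolates this as Proposition~\ref{prop:TAction}: one first embeds $\Hpi$ isometrically into a direct sum of copies of $\ell^2(\Gamma)$ intertwining $\pi|_\Gamma$ with $\oplus\lambda_\Gamma^\sigma$ (Lemma~\ref{lem:intertwiner}), identifies $\pi(\Gamma)''$ with a corner of $\{\oplus_i S:S\in\VN\}$, and then uses Kaplansky density together with the Bessel property of $\eta$ (which does follow from the Riesz upper bound, after a periodisation over $\Gamma_{[\eta]}$) to conclude that $T\eta=\sum_\gamma c_\gamma\pi(\gamma)\eta$ with $c\in\ell^2(\Gamma)$. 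A further cocycle computation (the ``additional part'' of Proposition~\ref{prop:TAction}) is then needed to see that the coefficients of $Tp_\eta$ are obtained from those of $T$ by exactly the averaging that the Riesz condition on $\Lambda_z$ kills. The paper calls this implication ``non-trivial'' and singles out Proposition~\ref{prop:riesz_separating} as ``a key result''; your proposal should at minimum flag that the passage from $T\in\pi(\Gamma)''$ to an $\ell^2$-expansion on a Bessel vector requires an argument of this kind.
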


Part (i) of Theorem \ref{thm:main2_intro} is an immediate consequence of Theorem \ref{thm:main1_intro}. For part (ii),
it is shown that the Riesz property of $(\pi_{\alpha} (\gamma) k^{(\alpha)}_z)_{\gamma \in \Lambda_z}$ implies that $k^{(\alpha)}_z$ is a $p_z$-separating vector. This implication is non-trivial and was shown for trivial stabilizers in \cite{romero2020density}. Proposition \ref{prop:riesz_separating} is an extension of \cite[Proposition 5.2]{romero2020density} to non-trivial stabilizers and is  a key result of this paper.

For $G = \mathrm{PSU}(1,1)$ acting on the unit disk $\mathbb{D} \subset \mathbb{C}^1$, the papers \cite{seip1993beurling, kellylyth1999uniform} provide a complete characterization of the frame and Riesz property of systems $(\pi_{\alpha} (\gamma) k^{(\alpha)}_z)_{\gamma \in \Lambda_z}$ in $A^2_{\alpha} (\mathbb{D})$; see \cite[Section 9.1]{romero2020density} for a detailed discussion. In particular, the results in \cite{seip1993beurling, kellylyth1999uniform} show that the density conditions of Theorem \ref{thm:main2_intro} are \emph{strict} inequalities. The techniques used in the present paper seem not to be able to yield strict versions of Theorem \ref{thm:main2_intro}. Lastly, it is expected that complementing sufficient conditions to Theorem \ref{thm:main2_intro} do not hold for domains $\Omega \subset \mathbb{C}^d$ in dimension $d > 1$; see \cite{marco2003density, jevtic1996interpolating} for some possible generalizations of the results \cite{seip1993beurling} to the unit ball.
For the failure of sufficiency of the lattice density conditions \cite{bekka2004square, romero2020density} underlying Theorem \ref{thm:density_intro} for coherent states in the Bargmann-Fock spaces on $\mathbb{C}^d$ with $d > 1$, see, e.g.,  \cite{Massaneda2000interpolating, ortega-cerda2006interpolation}.

\subsection{Extensions} For the sake of simplicity and comparison, the results in the introduction have only been stated for connected semi-simple Lie groups $G$ with a trivial center. However, the results in the main part of this paper are also valid for groups having merely a finite center. A technical difficulty that arises when $G$ has a non-trivial center is that the group von Neumann algebra associated with a lattice $\Gamma \leq G$ might no longer be a factor, i.e., it has a non-trivial center. In particular, this requires the use of center-valued rather than scalar-valued traces and von Neumann dimensions.

In addition to Theorem \ref{thm:main1_intro} and Theorem \ref{thm:main2_intro}, this paper also provides new density conditions
for general discrete series (i.e., possibly non-holomorphic discrete series) of semi-simple Lie groups in Theorem \ref{thm:density_main1} and Theorem \ref{thm:main1_frame}.

\subsection*{Organization} Section \ref{sec:vN} provides general results on von Neumann algebras and associated Hilbert modules that will be used for the main results of this paper. Several preliminary results on discrete series representations and their restrictions to lattices will be presented in Section \ref{sec:discrete_lattice}. These results are stated for general unimodular groups and may be of independent interest.
Section \ref{sec:discrete_semi} concerns results on discrete series and lattices that are specific for semi-simple Lie groups. Section \ref{sec:holomorphic_discrete} is devoted to the holomorphic discrete series representations and the proof of Theorem \ref{thm:main1_intro} and Theorem \ref{thm:main2_intro}.

\section{Hilbert modules associated to von Neumann algebras} \label{sec:vN}
This section concerns results on von Neumann algebras and Hilbert modules that will be exploited in subsequent sections. For the used background on these topics, see, e.g., \cite{takesaki2002theory, dixmier1981von, jones1997subfactors}.

\subsection{GNS construction}\label{Sect=GNS}
Let $B(\mathcal{H})$ be the algebra of bounded operators on a Hilbert space $\Hil$.
A unital $\ast$-subalgebra of $\mathcal{H}$ that is closed in the strong operator topology (SOT) is called a \emph{von Neumann algebra}.  We denote $M' := \{ a \in B(\Hil) : a b = b a, \; \forall b \in M \}$ for the commutant of $M$.
A map between von Neumann algebras is called \emph{normal} if it is strongly continuous on the unit ball.
A von Neumann algebra is called \emph{finite} if it admits a faithful normal tracial state, i.e., a normal linear functional $\tau : M \to \mathbb{C}$ satisfying
$\tau(a^* a) > 0$ for all $a \in M \setminus \{0\}$ and $\tau(ab)=\tau(ba)$ for all $a,b \in M$.
Given such a trace $\tau$, the pair $(M, \tau)$ is sometimes also called a \emph{tracial} von Neumann algebra.

For a tracial von Neumann algebra $(M, \tau)$,
an associated inner product is defined by
\[
 \langle a, b \rangle_{\tau} = \tau(b^* a), \quad a,b \in M,
\]
and the corresponding norm is $\| a \|_{2, \tau} = \tau(a^* a)^{1/2}$.
The Hilbert space completion of $M$ with respect to $\langle \cdot , \cdot \rangle_{\tau}$ is denoted by $L^2 (M)$.
The GNS representation allows to identify $M$ with the subalgebra of $B(L^2(M))$ of all operators given by left multiplication with $M$.

\subsection{Center-valued trace}
The \emph{center} of a von Neumann algebra $M$ is the intersection $\cZ(M) := M \cap M'$.
Given a tracial von Neumann algebra $(M, \tau)$, there exists a unique central-valued trace $\ctr$ on $M$, i.e., a normal bounded linear map $\ctr : M \to \cZ(M)$ such that $\ctr(ab) = \ctr(ba)$ for all $a,b \in M$, $\ctr(ba) = b \ctr(a)$ for all $a \in M$ and $b \in \cZ(M)$, $\ctr(a) = a$ for all $a \in \cZ(M)$, and $\ctr(a^* a) = 0$ implies $a = 0$ for all $a \in M$.
If $M$ is a factor, i.e., $\cZ(M) = \mathbb{C} \id$, then $\ctr = \tau$.

\subsection{Hilbert modules} Let $(M, \tau)$ be a tracial von Neumann algebra.
A (left) \emph{Hilbert $M$-module} is a Hilbert space $\Hil$ together with a normal unital $*$-homomorphism $\rho : M \to B(\Hil)$. A Hilbert $M$-module $\Hil$ is called \emph{faithful} if $\rho$ is injective.
  In general, we shall suppress $\rho$ in the notation and denote the action of $M$ on $\mathcal{H}$ by $a \eta = \rho(a) \eta$.
The Hilbert space $L^2(M)$ forms an $M$-module, sometimes called the \emph{standard $M$-module}.

The direct sum of countable many copies of $L^2 (M)$ is a left $M$-module,
and will be denoted by $ L^2 (M) \otimes \ell^2 (\mathbb{N})$. Any separable Hilbert $M$-module $\Hil$ is isomorphic to a submodule of $L^2 (M) \otimes \ell^2 (\mathbb{N})$, see, e.g.,
\cite[Proposition 2.1.2]{jones1997subfactors}.

\subsection{Cyclic and separating vectors} Let $\Hil$ be a Hilbert $M$-module. A vector $\eta \in \Hil$ is called \emph{separating} if $a \eta = 0$ with $a \in M$ implies $a = 0$, and $\eta$ is \emph{cyclic} if $M \eta$ is dense in $\Hil$. Note that separating vectors can only exist if $\Hil$ is faithful.

  For a projection $p \in M$, a vector $\eta \in p \Hil$ is called \emph{$p$-separating} if  $a \in Mp$ and $a \eta = 0$ implies that $a = 0$. Note that if $p = \id$, then a $p$-separating vector is just a separating vector for $M$. In general, it can be shown that a vector $\eta$ is $p$-separating if and only if $\eta$ is separating for the reduced von Neumann algebra $pMp$.  Indeed, if $\eta$ is separating for $pMp$ and $a p \eta = 0$ for $a \in M$,  then for all partial isometries $u \in M$ with $uu^\ast \leq p$ we have  $uap \eta = 0$ and hence $uap =0$. This suffices to conclude that $qa p = 0$, where $q$ is the central support of $p$ (see, e.g., \cite{takesaki2002theory}). But $qap = aqp = ap$ and so $ap=0$.

\subsection{Von Neumann dimension}
Henceforth, assume $M$ is a finite von Neumann algebra.
Let $\Hil$ be a separable Hilbert $M$-module, viewed as a submodule of $L^2 (M) \otimes \ell^2 (\mathbb{N})$, and let $p$ be the corresponding projection of $L^2 (M) \otimes \ell^2 (\mathbb{N})$ onto $\Hil$. Then $p$ is in the commutant $M' \otimes B(\ell^2(\mathbb{N}))$ of $M$ in $B(L^2 (M) \otimes \ell^2 (\mathbb{N}))$.
The commutant $M' \otimes B(\ell^2(\mathbb{N}))$ is semi-finite, hence admits a normal, semi-finite center-valued trace $\Phi$, see, e.g., \cite[Theorem II.3.4]{takesaki2002theory}.
The \emph{center-valued von Neumann dimension} of $\Hil$ is defined by
\[
 \cdim_M (\Hil) = \Phi (p).
\]
Its definition is independent of the choice of projection $p$, cf. \cite{bekka2004square} for more details.

Let $\Hil_1$ and $\Hil_2$ be Hilbert $M$-modules (considered as submodules of $L^2 (M) \otimes \ell^2 (\mathbb{N})$), with corresponding projections
$p_1$ and $p_2$, respectively. If $\Hil_1$ is isomorphic to a submodule of $\Hil_2$, then $\cdim_M (\Hil_1) \leq \cdim_M (\Hil_2)$, and if $\Hil_1$ is isomorphic to $\Hil_2$, then $\cdim_M (\Hil_1) = \cdim_M (\Hil_2)$. If $p_1$ and $p_2$ are finite projections,
then $\Hil_1$ is isomorphic to a submodule of $\Hil_2$ if and only if $\cdim_M(\Hil_1) \leq \cdim_M(\Hil_2)$, and
$\cdim_M(\Hil_1) = \cdim_M(\Hil_2)$ if and only if $\Hil_1$ and $\Hil_2$ are isomorphic as $M$-modules.
For proofs, see \cite[Proposition 3.2.4]{goodman1989coxeter}, \cite[Proposition 2]{bekka2004square} and \cite[Proposition 2.3]{enstad2021density}

The center-valued dimension $\cdim_M (\Hil)$ coincides with the so-called \emph{coupling operator} for $M p$, see, e.g., \cite[Proposition 3.2.5]{goodman1989coxeter} and \cite[Proposition 1]{bekka2004square}.
The following fundamental property of the coupling operator (center-valued von Neumann dimension) is well-known, among others, it can be found in \cite[Proposition V.3.13]{takesaki2002theory}.

\begin{theorem} \label{thm:coupling}
 Let $M$ be a finite von Neumann algebra and let $\Hil$ be a separable $M$-module.
 \begin{enumerate}
  \item[(i)] $M$ admits a cyclic vector if, and only if, $\cdim_M (\Hil) \leq \id$.
  \item[(ii)] $M$ admits a separating vector if, and only if, $\cdim_M (\Hil) \geq \id$.
 \end{enumerate}
 Moreover, if the equivalent statements in (ii) hold, then $\Hil$ is faithful.
\end{theorem}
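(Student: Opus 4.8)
The plan is to compare the module $\Hil$ with the standard module $L^2(M)$, taking $\cdim_M(L^2(M)) = \id$ as the defining normalization of the center-valued dimension. The whole statement then reduces to two structural equivalences: first, that $\Hil$ admits a cyclic vector for $M$ if and only if $\Hil$ is isomorphic to an $M$-submodule of $L^2(M)$; and second, that $\Hil$ admits a separating vector for $M$ if and only if $L^2(M)$ is isomorphic to an $M$-submodule of $\Hil$. Granting these, both assertions follow formally from the monotonicity and finite-projection comparison recorded above. For (i): a cyclic vector gives $\Hil \hookrightarrow L^2(M)$ by the first equivalence, hence $\cdim_M(\Hil) \le \cdim_M(L^2(M)) = \id$; conversely, if $\cdim_M(\Hil) \le \id$, then since the projection $q_0$ onto $L^2(M)$ is finite, the projection $p$ onto $\Hil$ is finite as well (its dimension being dominated by that of $q_0$), and the comparison theorem yields $\Hil \hookrightarrow L^2(M)$, so the first equivalence produces a cyclic vector. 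Assertion (ii) is treated symmetrically through the second equivalence.

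To prove the two equivalences I would treat both at once via the densely defined $M$-module map $T_\eta : L^2(M) \to \Hil$ determined by $\widehat{a} \mapsto a \eta$ on the dense subspace $\{ \widehat{a} : a \in M \}$. The key point is that $T_\eta$ is closable and that the partial isometry $V$ in the polar decomposition $\overline{T_\eta} = V \, |\overline{T_\eta}|$ intertwines the $M$-actions, so that $V^\ast V$ is a projection in the commutant of $M$ on $L^2(M)$ and $V V^\ast$ is a projection in the commutant of $M$ on $\Hil$, with $V$ restricting to an isomorphism of $(V^\ast V) L^2(M)$ onto $(V V^\ast) \Hil$. If $\eta$ is cyclic, then $\overline{M \eta} = \Hil$ forces $V V^\ast = \id_{\Hil}$, whence $\Hil \cong (V^\ast V) L^2(M)$ is a submodule of $L^2(M)$. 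If $\eta$ is separating, then $\overline{T_\eta}$ has trivial kernel, forcing $V^\ast V = \id_{L^2(M)}$, whence $L^2(M) \cong (V V^\ast) \Hil$ is a submodule of $\Hil$. The reverse implications are immediate: a submodule $p L^2(M)$ with $p$ in the commutant has the cyclic vector $p \widehat{1}$, since $\overline{M p \widehat{1}} = p \, \overline{M \widehat{1}} = p L^2(M)$; and any isometric $M$-module copy of $L^2(M)$ inside $\Hil$ carries the separating vector $\widehat{1} \in L^2(M)$ to a separating vector.

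It then remains to supply the dimension bookkeeping and the final claim. Since $M$ is finite, its commutant in the standard representation on $L^2(M)$ is finite as well, so $q_0 = \id \barotimes e_{11}$ is a finite projection in $M' \barotimes B(\ell^2(\mathbb{N}))$ and the normalization $\Phi(q_0) = \id$ holds, which is precisely $\cdim_M(L^2(M)) = \id$. In the separating case the projection $p$ onto $\Hil$ need not be finite; here, in place of the finite comparison theorem, I would invoke the center-valued comparison of projections to compare $q_0$ with $p$, the inequality $\Phi(q_0) = \id \le \Phi(p)$ on every central summand giving $q_0 \preceq p$, that is $L^2(M) \hookrightarrow \Hil$. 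The final assertion is immediate from the definitions: a separating vector can exist only when the module is faithful.

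The hard part will be the forward directions of the two equivalences, namely the closability of $T_\eta$ together with the verification that the partial isometry in its polar decomposition is a module intertwiner whose support and range projections are computed correctly from the cyclicity and separateness of $\eta$; once these classical facts are in place, the numerical criteria follow formally from the comparison theory. A secondary technical point, specific to the non-factor setting relevant to this paper, is the semifinite, center-valued bookkeeping for the projections $q_0$ and $p$ in $M' \barotimes B(\ell^2(\mathbb{N}))$, in particular establishing finiteness of $p$ in assertion (i) and applying the center-valued comparison of projections in assertion (ii).
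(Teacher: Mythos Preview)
The paper does not prove this theorem; it is stated as a well-known result and attributed to \cite[Proposition V.3.13]{takesaki2002theory}. There is therefore no ``paper's own proof'' to compare against.

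That said, your outline is a correct sketch of the classical argument and is essentially the route taken in standard references: reduce both assertions to a comparison of $\Hil$ with the standard module $L^2(M)$ via the closable densely defined intertwiner $\widehat{a} \mapsto a\eta$, extract an $M$-module partial isometry from its polar decomposition, and then invoke comparison of projections in the semifinite commutant $M' \barotimes B(\ell^2(\mathbb{N}))$. The two points you single out as delicate are exactly the right ones. Closability of $T_\eta$ is genuine but standard once one embeds $\Hil$ into $L^2(M)\otimes\ell^2(\mathbb{N})$ and observes that each component $a \mapsto a\eta_i$ is right multiplication by an $L^2$-element, hence a closed operator affiliated with $M'$. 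For assertion~(ii) with $p$ possibly infinite, your appeal to center-valued comparison in the semifinite algebra (using finiteness of $q_0$) is the correct substitute for the finite comparison theorem quoted in the paper. The final faithfulness claim follows exactly as you indicate.
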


 Define the anti-linear isometry $J: L^2(M) \rightarrow L^2(M)$ determined by $a \mapsto a^\ast, \; a \in M$. $J$ is known as the \emph{modular conjugation}.  For $a \in M$ and $\eta \in L^2(M)$ we set $\eta a = J a^\ast J \eta$.
The estimates provided by the following two results will be essential in the sequel.

\begin{proposition}\label{Prop=DimEstimate}
 Let $M$ be a finite von Neumann algebra and let $\Hil$ be a separable $M$-module.
Let $\eta \in \Hil$ and let $p \in M$ be such that $p \eta = \eta$. Then
\[
\cdim_M(   \overline{ M \eta }  ) \leq \ctr(p).
\]
\end{proposition}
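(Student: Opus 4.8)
The plan is to compare the cyclic submodule $\overline{M\eta}$ with the right-reduced standard module $L^2(M)p := \overline{Mp}$, whose center-valued dimension is exactly $\ctr(p)$, and then to exhibit $\overline{M\eta}$ as a submodule of $\overline{Mp}$ so that the monotonicity of $\cdim_M$ recorded in Section \ref{sec:vN} yields the claim. Thus the two ingredients are: (a) the identity $\cdim_M(\overline{Mp}) = \ctr(p)$, and (b) an isometric $M$-linear embedding of $\overline{M\eta}$ into $\overline{Mp}$.

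For (b), I would pass through the vector functional. Set $\phi(a) = \langle a\eta, \eta\rangle$, a normal positive functional on $M$ with $\phi(\id) = \|\eta\|^2$. Since $p\eta = \eta$, one has $\phi(a) = \langle ap\eta, p\eta\rangle = \phi(pap)$ for all $a \in M$. Applying the noncommutative Radon--Nikodym theorem relative to the faithful normal trace $\tau$ produces $h \in L^1(M)_+$ with $\phi(a) = \tau(ha)$, and the relation $\phi(a) = \phi(pap)$ then forces $h = php$; in particular the support projection of $h$ lies under $p$, so $h^{1/2} \in L^2(M)$ satisfies $h^{1/2}p = h^{1/2}$. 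I would then define
\[
 T : \overline{M\eta} \to L^2(M), \qquad T(a\eta) = a\, h^{1/2},
\]
and verify that $T$ is a well-defined $M$-linear isometry via the computation $\|a\eta\|^2 = \phi(a^*a) = \tau(a^*a\,h) = \tau(h^{1/2} a^* a\, h^{1/2}) = \|a\,h^{1/2}\|_{2,\tau}^2$. Its range is $\overline{Mh^{1/2}}$, and since $a h^{1/2} = a h^{1/2}p \in \overline{Mp}$ for every $a$, this range is contained in $\overline{Mp}$. Hence $\overline{M\eta}$ is isomorphic, as an $M$-module, to a submodule of $\overline{Mp}$.

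For (a), I would use that the orthogonal projection of $L^2(M)$ onto $\overline{Mp}$ is the right multiplication $JpJ \in M'$, together with the normalization $\cdim_M(L^2(M)) = \id$ and the additivity of $\cdim_M$ over the decomposition $L^2(M) = \overline{Mp} \oplus \overline{M(\id - p)}$; combined with the invariance of $\cdim_M$ under Murray--von Neumann equivalence of the generating projection, this identifies $\cdim_M(\overline{Mp})$ with $\ctr(p)$. Putting the pieces together,
\[
 \cdim_M(\overline{M\eta}) = \cdim_M(\mathrm{range}\,T) \leq \cdim_M(\overline{Mp}) = \ctr(p).
\]

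The main obstacle I anticipate is the bookkeeping forced by the non-factor setting: in a factor, (a) is simply $\cdim_M(\overline{Mp}) = \tau(p)$, but in general one must match the center-valued trace $\Phi$ on the commutant $M'$ with $\ctr$ on $M$ through the modular conjugation $J$ and check that the normalization survives this transport. A secondary technical point is the handling of the possibly unbounded density $h \in L^1(M)$: one must justify the trace manipulations $\tau(h^{1/2}a^*a h^{1/2}) = \tau(a^*a\,h)$ and the identity $h = php$ within the $L^1$--$L^\infty$ duality, which is precisely where the hypothesis $p\eta = \eta$ is genuinely used.
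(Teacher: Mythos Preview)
Your proposal is correct and follows essentially the same route as the paper: embed the cyclic module $\overline{M\eta}$ into the standard module $L^2(M)$, observe that its image sits inside $L^2(M)p=\overline{Mp}$ because $p\eta=\eta$, and then read off the dimension via $\Phi(JpJ)=\ctr(p)$. The only cosmetic difference is that the paper invokes the abstract fact that any cyclic $M$-module embeds into $L^2(M)$ and then works with the support projection $q\leq p$ of the image of $\eta$ (obtaining the sharper equality $\cdim_M(\overline{M\eta})=\ctr(q)$ before bounding by $\ctr(p)$), whereas you construct this embedding explicitly via the Radon--Nikodym density $h^{1/2}$ of the vector state $\phi$; your $h^{1/2}$ is precisely the image of $\eta$ under the paper's unnamed embedding, and your inclusion $\overline{Mh^{1/2}}\subseteq\overline{Mp}$ is exactly the paper's $q\leq p$.
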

\begin{proof}
 Without loss of generality, it may be assumed that $\Hil = \overline{ M \eta }$ and in particular that $\Hil$ contains a cyclic vector. Therefore we may identify $\Hil$ with a sub-$M$-module of $L^2(M)$.  Let $q \in M$ be the  support projection of $\eta \in L^2(M)$; $q \in M$ is the projection onto $\overline{ \eta M}$. As $p \eta = \eta$ we have $q \leq p$ and so $\ctr(q) \leq \ctr(p)$. A direct calculation gives
 \[ L^2(M) q = J q J L^2(M) = J q L^2(M) = J \overline{\eta M} = \overline{ M \eta } = \Hil. \] Hence, $\cdim_M(   \Hil  ) = \Phi(JqJ) = \ctr(q)$, which concludes the proof.
\end{proof}

\begin{proposition} \label{prop:dimension_pseparating}
 Let $M$ be a finite von Neumann algebra and let $\Hil$ be a separable $M$-module.
 If there exists a $p$-separating vector in $p\Hil$, then $\cdim_M (\Hil) \geq \ctr(p)$.
\end{proposition}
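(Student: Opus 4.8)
The plan is to establish the bound by passing to the cyclic submodule generated by the $p$-separating vector and then reusing the support–projection computation underlying Proposition \ref{Prop=DimEstimate}, with the separating hypothesis upgrading the inequality there into an equality. Concretely, let $\eta \in p\Hil$ be $p$-separating and set $\Hil_0 := \overline{M\eta}$. Since $\Hil_0 \subseteq \Hil$ is a submodule, monotonicity of the center-valued dimension gives $\cdim_M(\Hil) \geq \cdim_M(\Hil_0)$, so it suffices to prove $\cdim_M(\Hil_0) \geq \ctr(p)$. Here $\eta$ is cyclic in $\Hil_0$, it satisfies $p\eta = \eta$ (so that $\eta \in p\Hil_0$), and its $p$-separating property is unchanged, since that property refers only to the vectors $a\eta$ for $a \in Mp$. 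As in the proof of Proposition \ref{Prop=DimEstimate}, I would identify $\Hil_0$ with a cyclic submodule of $L^2(M)$ and let $q \in M$ be the support projection of $\eta$, i.e. the smallest projection with $q\eta = \eta$ (equivalently, the projection onto $\overline{\eta M}$); the same computation then yields $\cdim_M(\Hil_0) = \ctr(q)$.

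It remains to identify $q$ with $p$. First, $p\eta = \eta$ together with the minimality of $q$ gives $q \leq p$. The key step is to translate the separating hypothesis into a statement about $q$: the left annihilator $\{a \in M : a\eta = 0\}$ is a $\sigma$-weakly closed left ideal of $M$, hence equals $M(1-q)$, so that for $a \in M$ one has $a\eta = 0$ if and only if $aq = 0$. Applying this to $b := p - q$, which lies in $Mp$ because $bp = (p-q)p = p - q = b$ and satisfies $bq = (p-q)q = 0$ (using $qp = pq = q$ from $q \leq p$), I obtain $b\eta = 0$; the $p$-separating property of $\eta$ then forces $b = 0$, i.e. $q = p$. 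Consequently $\cdim_M(\Hil_0) = \ctr(q) = \ctr(p)$, and combined with the monotonicity estimate above this yields $\cdim_M(\Hil) \geq \ctr(p)$, as desired.

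The step I expect to require the most care is the reduction to the standard module and the resulting identity $\cdim_M(\Hil_0) = \ctr(q)$: one must justify that the cyclic module $\overline{M\eta}$ embeds as a submodule of $L^2(M)$ and that the coupling computation of Proposition \ref{Prop=DimEstimate} applies verbatim. This is precisely the computation already carried out there, which shows that the present statement is genuinely dual to Proposition \ref{Prop=DimEstimate}: the hypothesis $p\eta = \eta$ alone gives $q \leq p$ and hence the upper bound $\cdim_M(\overline{M\eta}) \leq \ctr(p)$, whereas the additional $p$-separating hypothesis pins down $q = p$ and turns this into the matching lower bound.
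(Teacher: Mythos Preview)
Your proof is correct and takes a genuinely different route from the paper's. The paper argues abstractly: a $p$-separating vector is separating for $pMp$, so Theorem~\ref{thm:coupling} gives $\cdim_{pMp}(p\Hil)\geq\id$, and then one invokes the compression formula $\ctr(p)\,\cdim_{pMp}(p\Hil)=\cdim_M(\Hil)$ from the literature to conclude. Your argument instead stays entirely inside the standard-module picture already set up for Proposition~\ref{Prop=DimEstimate}: you pass to the cyclic submodule $\overline{M\eta}$, reuse the support-projection identity $\cdim_M(\overline{M\eta})=\ctr(q)$, and then use the $p$-separating hypothesis to pin down $q=p$ exactly (via $b=p-q\in Mp$ with $b\eta=0$). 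This makes the duality with Proposition~\ref{Prop=DimEstimate} completely transparent---the hypothesis $p\eta=\eta$ alone gives $q\leq p$, and separating upgrades it to $q=p$---and as a byproduct yields the sharper equality $\cdim_M(\overline{M\eta})=\ctr(p)$, at the cost of redoing the $L^2(M)$ embedding rather than citing the compression formula. The paper's version is shorter and more black-box; yours is more self-contained within the framework the paper has already built.
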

\begin{proof}
 If $p\Hil$ admits a $p$-separating vector, then that vector is clearly separating for $pMp$. This implies that $\cdim_{pMp} (p\Hil) \geq \id$ by Theorem \ref{thm:coupling}. The result follows therefore directly from the well-known identity
 \[
  \ctr(p) \cdim_{pMp} (p\Hil) = \cdim_M (\Hil),
 \]
 see, e.g., \cite[Proposition 3.2.5(h)]{goodman1989coxeter} (or \cite[Section V]{takesaki2002theory}, \cite[Section III.6]{dixmier1981von}).
\end{proof}

\section{Discrete series representations restricted to lattices} \label{sec:discrete_lattice}
Throughout, $G$ denotes a second countable unimodular group with a fixed left Haar measure $\mu_G$. The identity element in $G$ is denoted by $e_G$.

\subsection{Discrete series and cocycles}
A projective unitary representation $(\pi, \Hpi)$ of $G$ on a (complex) Hilbert space $\Hpi$ is a map $\pi : G \to \mathcal{U}(\Hpi)$ such that $\pi(e_G) = \id_{\Hpi}$ and
\begin{align} \label{eq:cocycle_rep}
 \pi(x) \pi(y) = \sigma(x,y) \pi(xy), \quad x,y \in G,
\end{align}
for some $\sigma : G \times G \to \mathbb{T}$. The function $\sigma$ can be shown to satisfy the identities
\begin{align} \label{eq:cocycle_identity}
 \sigma(x, e_G) = \sigma(e_G, x) = 1 \quad \text{and} \quad
 \sigma(x,y) \sigma(xy,z) = \sigma(x, yz) \sigma(y,z), \quad x,y,z \in G.
\end{align}
Any projective representation will be assumed to be measurable,
i.e., the map $x \mapsto \langle f, \pi(x) g \rangle$ is Borel measurable for all $f, g \in \Hpi$.
This implies, in particular, that the function $\sigma$ appearing in \eqref{eq:cocycle_rep} is Borel measurable on $G \times G$.
It is called a \emph{cocycle} and
a projective unitary representation with cocycle $\sigma$ will simply be called a
\emph{$\sigma$-representation}. A cocycle $\sigma$ is called a \emph{coboundary} if there exists $u : G \to \mathbb{T}$ such that
\[
 \sigma(x,y) = u(x) u(y) \overline{u(xy)}, \quad x,y \in G.
\]

An element $x \in G$ is called $\sigma${\it-regular} in $G$ if $\sigma(x,y) = \sigma(y,x)$ for all $y \in Z(x)$, where $Z(x) := \{ y \in G : yx = xy \}$
is the centralizer of $x$ in $G$.
If $x$ is $\sigma$-regular, then $y x y^{-1}$ is $\sigma$-regular for all $y \in G$,
and the associated conjugacy class $C_x := \{ y x y^{-1} : y \in G\}$
is said to be \emph{$\sigma$-regular}.

A $\sigma$-representation $(\pi, \Hpi)$ is \emph{square-integrable} if there exists $\eta \in \Hpi \setminus \{0\}$ such that
$\int_G |\langle \eta, \pi(x) \eta \rangle|^2\; d\mu_G (x) < \infty$. For an irreducible, square-integrable $\pi$, there exists a unique $d_{\pi} > 0$, called the \emph{formal dimension} of $\pi$, such that
\begin{align} \label{eq:ortho}
\int_G | \langle f, \pi(x) \eta \rangle |^2 \; d\mu_G (x) = d_{\pi}^{-1} \| f \|_{\Hpi}^2 \| \eta \|_{\Hpi}^2
\end{align}
for all $f, \eta \in \Hpi$. An irreducible, square-integrable $\pi$ is called a \emph{discrete series $\sigma$-representation}.

For more background on (projective) discrete series representations, see, e.g., \cite{radulescu1998berezin, neeb1999holomorphy}.

\subsection{Lattices}
A discrete subgroup $\Gamma \leq G$ is called a \emph{lattice}
if $G/\Gamma$ admits a finite $G$-invariant Radon measure.
For a lattice $\Gamma \leq G$ and cocycle $\sigma : \Gamma \times \Gamma \to \mathbb{T}$,
the associated twisted left-regular representation $(\lambda_{\Gamma}^{\sigma}, \ell^2 (\Gamma))$ of $\Gamma$ is given by
\[
(\lambda_{\Gamma}^{\sigma} (\gamma) c )_{\gamma'} = \sigma(\gamma, \gamma^{-1} \gamma') c_{\gamma^{-1} \gamma'}, \quad \gamma, \gamma' \in \Gamma.
\]
The representations $(\pi|_{\Gamma}, \Hpi)$ and $(\lambda_{\Gamma}^{\sigma}, \ell^2 (\Gamma))$ satisfy the intertwining property
\[
C_{\eta} (\pi(\gamma) f) (\gamma') = \sigma(\gamma, \gamma^{-1} \gamma') C_{\eta} f (\gamma^{-1} \gamma') = \lambda_{\Gamma}^{\sigma} (\gamma) C_{\eta} f (\gamma'), \quad \gamma, \gamma' \in \Gamma,
\]
where $C_{\eta} f(\gamma') = \langle f, \pi(\gamma') \eta \rangle$  for a vector $\eta \in \Hpi$ such that
$ C_{\eta} f \in \ell^2 (\Gamma)$.

A \emph{Bessel vector} is an element $\eta \in \Hpi$ such that the associated map
\[
C_{\eta} : \Hpi \to \ell^2 (\Gamma), \quad f \mapsto (\langle f, \pi(\gamma) \eta \rangle)_{\gamma \in \Gamma}
\]
is bounded.
The space $\cB_{\pi}$ of Bessel vectors is norm dense in $\Hpi$, see, e.g., \cite[Lemma 7.1]{romero2020density}.

The restriction $\pi|_{\Gamma}$ and associated $\lambda_{\Gamma}^{\sigma}$
are generally related as follows:

\begin{lemma} \label{lem:intertwiner}
 Let $(\pi, \Hpi)$ be a discrete series $\sigma$-representation of $G$ and let $\Gamma \leq G$ be a lattice.
 There exists an index set $I$ and an isometry $U : \Hpi \to \oplus_{i \in I} \ell^2 (\Gamma)$
 interwining $\pi|_{\Gamma}$ and $\oplus_{i \in I} \lambda_{\Gamma}^{\sigma}$, i.e.,
 \[
  U \pi(\gamma)  =  \big( \oplus_{i \in I} \lambda_{\Gamma}^{\sigma} \big) (\gamma) U, \quad \gamma \in \Gamma.
 \]
\end{lemma}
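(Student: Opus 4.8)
The plan is to build the isometry $U$ out of the coefficient maps $C_{\eta}$ introduced just above, exploiting that every bounded $C_{\eta}$ already intertwines $\pi|_{\Gamma}$ with $\lambda_{\Gamma}^{\sigma}$. The point is that a single Bessel vector need not give an isometry, so I would assemble a whole family: the goal is to produce Bessel vectors $(\eta_i)_{i \in I}$ whose coefficient maps combine into $U f = (V_{\eta_i} f)_{i \in I}$, where $V_{\eta_i}$ is the partial isometry appearing in the polar decomposition of $C_{\eta_i}$. Writing $\mathcal{M} = \pi(\Gamma)''$, the whole construction takes place relative to the commutant $\mathcal{M}' = \pi(\Gamma)'$.

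First I would record the two elementary facts driving the argument. Since $C_{\eta} \pi(\gamma) = \lambda_{\Gamma}^{\sigma}(\gamma) C_{\eta}$ for every Bessel vector $\eta$, the positive operator $C_{\eta}^{*} C_{\eta}$ lies in $\mathcal{M}' = \pi(\Gamma)'$; writing the polar decomposition $C_{\eta} = V_{\eta} |C_{\eta}|$ with $|C_{\eta}| = (C_{\eta}^{*} C_{\eta})^{1/2} \in \mathcal{M}'$, one checks that $V_{\eta} \colon \Hpi \to \ell^2(\Gamma)$ again intertwines $\pi|_{\Gamma}$ and $\lambda_{\Gamma}^{\sigma}$ and that $p_{\eta} := V_{\eta}^{*} V_{\eta}$ is the support projection of $C_{\eta}^{*} C_{\eta}$, hence a projection in $\mathcal{M}'$. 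Second, for any projection $q \in \mathcal{M}'$ the vector $q \eta$ is again Bessel, with $C_{q\eta} = C_{\eta} q$ and support projection $p_{q\eta} \leq q$; moreover $p_{q\eta} \neq 0$ as soon as $q C_{\eta}^{*} \neq 0$, which for nonzero $q$ can always be arranged, because evaluating coefficient sequences at $\gamma = e_G$ shows, via the density of the Bessel vectors $\cB_{\pi}$ in $\Hpi$, that $q = 0$ whenever $C_{\eta} q = 0$ for all Bessel $\eta$.

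With these facts in hand, I would run an exhaustion argument inside $\mathcal{M}'$. By Zorn's lemma choose a maximal family $(\eta_i)_{i \in I}$ of Bessel vectors whose support projections $p_{\eta_i}$ are mutually orthogonal, and set $P = \sum_{i} p_{\eta_i} \in \mathcal{M}'$. If $P \neq \id_{\Hpi}$, then $q := \id_{\Hpi} - P$ is a nonzero projection in $\mathcal{M}'$, and the second fact produces a Bessel vector $q\eta$ with $0 \neq p_{q\eta} \leq q$, orthogonal to every $p_{\eta_i}$, contradicting maximality; hence $\sum_i p_{\eta_i} = \id_{\Hpi}$. Separability of $\Hpi$ makes $I$ countable. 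Defining $U = \bigoplus_{i \in I} V_{\eta_i} \colon \Hpi \to \bigoplus_{i \in I} \ell^2(\Gamma)$ then gives $U^{*} U = \sum_i V_{\eta_i}^{*} V_{\eta_i} = \sum_i p_{\eta_i} = \id_{\Hpi}$, so $U$ is an isometry, while the intertwining relation for each $V_{\eta_i}$ yields $U \pi(\gamma) = (\oplus_{i} \lambda_{\Gamma}^{\sigma})(\gamma) U$.

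The main obstacle is precisely this exhaustion step: everything hinges on the support projections $p_{\eta}$ living in the commutant $\mathcal{M}'$ and on being able to shrink a Bessel vector by a commutant projection without leaving the class of Bessel vectors, so that the maximal family genuinely exhausts $\id_{\Hpi}$ rather than merely spanning a dense subspace. Verifying that $q\eta$ is Bessel with $C_{q\eta} = C_{\eta} q$ and support projection below $q$, together with the density of $\cB_{\pi}$ used to rule out a nonzero residual projection, is the delicate part; the intertwining and isometry properties of $U$ are then purely formal.
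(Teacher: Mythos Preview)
Your argument is correct and uses the same ingredients as the paper's proof: density of Bessel vectors, the intertwining property of the coefficient maps $C_\eta$, their polar decompositions, and a Zorn's lemma exhaustion. The organization differs slightly. The paper applies Zorn directly to pairs $(\mathcal{K},V)$ consisting of a $\pi|_\Gamma$-invariant subspace $\mathcal{K}\subseteq\Hpi$ together with an intertwining isometry $V:\mathcal{K}\to\bigoplus\ell^2(\Gamma)$, and shows maximality forces $\mathcal{K}=\Hpi$ by producing one more partial isometry from $C_{p\eta}$ on the orthogonal complement. You instead apply Zorn to families of Bessel vectors with mutually orthogonal support projections inside the commutant $\mathcal{M}'$, and assemble the isometry $U=\bigoplus_i V_{\eta_i}$ only at the end once $\sum_i p_{\eta_i}=\id$. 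Your formulation makes the role of the commutant more explicit and yields the isometry property via $U^*U=\sum_i p_{\eta_i}$ in one line; the paper's version is more direct in that it never needs to verify $C_{q\eta}=C_\eta q$ or track support projections abstractly. Neither approach is more general or more elementary---they are two packagings of the same exhaustion argument.
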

\begin{proof}
Let $\mathcal{I}$ be the set of all pairs $(\mathcal{H}, U)$ with $\mathcal{H}$ a closed $\pi|_{\Gamma}$-invariant subspace of $\Hpi$ and $U: \mathcal{H} \rightarrow \oplus_{i \in I} \ell^2(\Gamma)$ an isometry into a direct sum over copies of $\ell^2 (\Gamma)$ indexed by some set $I = I(\mathcal{H}, U)$.  Then $\mathcal{I}$ carries the order $(\mathcal{H}, U) \leq (\mathcal{K}, V)$ if $\mathcal{H} \subseteq \mathcal{K}, I(\mathcal{H}, U) \subseteq I(\mathcal{K}, V)$ and $V$ extends $U$. Every linear chain $(\mathcal{H}_i, U_i)$ in $\mathcal{I}$ has an upper bounded $(\overline{ \oplus_i \Hil_i}, U)$ with $U$ the unique map such that $U\vert_{\Hil_i} = U_i$. By Zorn's lemma, $\mathcal{I}$ must contain a maximal element $I = (\mathcal{K}, V)$. For completing the proof, it remains therefore to show that $\mathcal{K} = \Hpi$ and that $\mathcal{I}$ is non-empty. The next paragraph shows both.

Assume towards a contradiction that $\mathcal{K}^\perp$ is a non-zero $\pi|_{\Gamma}$-invariant subspace of $\Hpi$. Let $p: \Hpi \rightarrow \mathcal{K}^\perp$ be the orthogonal projection onto $\mathcal{K}^\perp$, so that $p \in \pi(\Gamma)'$.  Since the Bessel vectors $\cB_{\pi}$ are norm dense in $\Hpi$, there exists  $\eta \in \cB_\pi$ such that $p \eta \not = 0$. Clearly, $p \eta \in \cB_\pi$, i.e., $C_{p \eta}$ is bounded from $\Hpi$ into $\ell^2(\Gamma)$. Furthermore, the intertwining property $\lambda_{\Gamma}^{\sigma} (\gamma) C_{p \eta} = C_{p \eta}  \pi(\gamma)$ holds for all  $\gamma \in \Gamma$, showing that the kernel   $\ker(C_{p \eta})$ is $\pi|_{\Gamma}$-invariant. Since $\mathcal{K} \subseteq \ker (C_{p \eta})$ by $\pi|_{\Gamma}$-invariance of $\mathcal{K}$, it follows that $\ker(C_{p \eta})^\perp \subseteq \mathcal{K}^\perp$.  Let $C_{p \eta} = U \vert C_{p \eta} \vert$ be the polar decomposition. Then $U$ forms a partial isometry from $\mathcal{K}^\perp$ into  $\ell^2(\Gamma)$ with support  $\ker(C_{p \eta})^\perp$ intertwining $\pi|_{\Gamma}$ and $\lambda_{\Gamma}^{\sigma}$. Therefore, $(\mathcal{K} \oplus \ker(U)^\perp, V \oplus U)$ is larger than $(\mathcal{K}, V)$, which contradicts the maximality of $(\mathcal{K}, V)$.
  \end{proof}

\subsection{Twisted von Neumann algebras}
For a discrete series $\sigma$-representation $(\pi, \Hpi)$ of $G$ and a lattice $\Gamma \leq G$, the associated von Neumann algebra is given by  $\pi(\Gamma) '' = (\pi(\Gamma)')'$. Similarly, the von Neumann algebra generated by $\lambda_{\Gamma}^{\sigma} (\Gamma) \subset \mathcal{B}(\ell^2(\Gamma))$ is given by $\VN := \lambda_{\Gamma}^{\sigma} (\Gamma)''$. It is called the (twisted)
\emph{group von Neumann algebra}. A faithful normal trace on $\VN$ is given by
\[
 \tau(a) = \langle a \delta_e, \delta_e \rangle, \quad a \in \VN,
\]
where $\delta_e \in \ell^2 (\Gamma)$ denotes the indicator at the identity in $\Gamma$. The GNS construction $L^2 (\VN)$ relative to $\tau$ is canonically isomorphic to $\ell^2 (\Gamma)$.

It should be mentioned that for a unitary $\sigma$-representation $\pi$ of $G$, its restriction $\pi|_{\Gamma}$ to a lattice $\Gamma \leq G$ could be (equivalent to) a genuine unitary representation, so that $\sigma : \Gamma \times \Gamma \to \mathbb{T}$ is trivial. The algebra $\VN$ is then an ordinary (non-twisted) group von Neumann algebra. However, for the purposes of the present paper, it is essential to treat possible twisted group von Neumann algebas, see, e.g., \cite[Appendix A]{jones2020bergman} for relevant examples.

The following simple lemma will be used repeatedly below.

\begin{lemma}\label{lem:projection}
Let $\Lambda \subset \Gamma$ be a finite group. Suppose that $\sigma(\gamma, \gamma') = u(\gamma) u(\gamma') \overline{u(\gamma \gamma') }$, where $\gamma, \gamma' \in \Lambda$, for some $u: \Lambda \rightarrow \mathbb{T}$. Then
\[
p := \frac{1}{ \vert \Lambda \vert} \sum_{  \gamma \in \Lambda  } \overline{u(\gamma)} \pi(\gamma)
\]
is a projection in $\pi(\Gamma)'' \subseteq \mathcal{B} (\Hpi)$.
\end{lemma}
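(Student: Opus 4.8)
The plan is to verify directly the three defining properties of a projection in the von Neumann algebra $\pi(\Gamma)''$: membership, self-adjointness $p = p^*$, and idempotency $p = p^2$. Membership is immediate, since $p$ is a finite linear combination of the unitaries $\pi(\gamma)$ with $\gamma \in \Lambda \subseteq \Gamma$, all of which lie in $\pi(\Gamma)''$ by definition. The substance of the proof is therefore the two identities $p = p^*$ and $p^2 = p$, both of which rest on careful bookkeeping of the scalar cocycle phases.

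Before the two main computations I would record two preliminary consequences of the hypotheses. First, since $\Lambda$ is a group it contains $e_G$; evaluating the coboundary relation at $\gamma = \gamma' = e_G$ and using $\sigma(e_G, e_G) = 1$ from \eqref{eq:cocycle_identity} forces $u(e_G) = 1$. Second, I would express the adjoint of each $\pi(\gamma)$ through the cocycle: as $\pi(\gamma)$ is unitary and $\pi(\gamma)\pi(\gamma^{-1}) = \sigma(\gamma, \gamma^{-1})\id_{\Hpi}$, one obtains $\pi(\gamma)^* = \overline{\sigma(\gamma, \gamma^{-1})}\,\pi(\gamma^{-1})$. Combining this with the coboundary relation (valid since $\gamma, \gamma^{-1} \in \Lambda$) and $u(e_G) = 1$ gives $\sigma(\gamma, \gamma^{-1}) = u(\gamma) u(\gamma^{-1})$, and hence $\pi(\gamma)^* = \overline{u(\gamma)}\,\overline{u(\gamma^{-1})}\,\pi(\gamma^{-1})$.

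For self-adjointness I would compute $p^*$ term by term: conjugating the coefficient turns $\overline{u(\gamma)}$ into $u(\gamma)$, which against $\pi(\gamma)^* = \overline{u(\gamma)}\,\overline{u(\gamma^{-1})}\,\pi(\gamma^{-1})$ collapses via $u(\gamma)\overline{u(\gamma)} = 1$, leaving the term $\overline{u(\gamma^{-1})}\,\pi(\gamma^{-1})$. Re-indexing the sum by the bijection $\gamma \mapsto \gamma^{-1}$ of $\Lambda$ returns exactly $p$, so $p^* = p$. For idempotency I would expand $p^2$ as a double sum over $\gamma, \gamma' \in \Lambda$ and substitute $\pi(\gamma)\pi(\gamma') = \sigma(\gamma,\gamma')\pi(\gamma\gamma') = u(\gamma)u(\gamma')\overline{u(\gamma\gamma')}\pi(\gamma\gamma')$; the prefactor $\overline{u(\gamma)}\,\overline{u(\gamma')}$ cancels $u(\gamma)u(\gamma')$, leaving $\overline{u(\gamma\gamma')}\pi(\gamma\gamma')$. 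For each fixed $\gamma$ the map $\gamma' \mapsto \gamma\gamma'$ permutes $\Lambda$, so the inner sum equals $\sum_{\mu \in \Lambda}\overline{u(\mu)}\pi(\mu) = |\Lambda|\, p$; summing over the $|\Lambda|$ values of $\gamma$ and dividing by $|\Lambda|^2$ yields $p^2 = p$.

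The computations are elementary, and the only point requiring genuine attention—the minor obstacle I would flag—is the correct handling of the cocycle phases: specifically, putting $\pi(\gamma)^*$ in the form $\overline{u(\gamma)}\,\overline{u(\gamma^{-1})}\,\pi(\gamma^{-1})$ and confirming $\sigma(\gamma,\gamma^{-1}) = u(\gamma)u(\gamma^{-1})$ on $\Lambda$. These two facts are precisely what make the unwanted phases cancel in both verifications, so once they are in place both $p = p^*$ and $p^2 = p$ follow by the reindexing arguments above.
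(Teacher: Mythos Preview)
Your proof is correct and follows essentially the same route as the paper: a direct verification that $p^2=p$ via the double-sum expansion with cancellation from the coboundary relation and reindexing, and that $p^*=p$ via $\pi(\gamma)^*=\overline{\sigma(\gamma,\gamma^{-1})}\pi(\gamma^{-1})$ together with $\sigma(\gamma,\gamma^{-1})=u(\gamma)u(\gamma^{-1})$. Your explicit derivation of $u(e_G)=1$ is a minor expository addition; the paper uses this fact implicitly in the self-adjointness step.
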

\begin{proof}
For $p$ being idempotent, note that a direct calculation entails
\begin{align*}
p^2 &=   \frac{1}{ \vert \Lambda \vert^2 } \sum_{\gamma, \gamma' \in \Lambda} \overline{ u(\gamma ) u(\gamma' ) } \pi(\gamma) \pi( \gamma')
=   \frac{1}{ \vert \Lambda \vert^2 } \sum_{\gamma, \gamma' \in \Lambda} \overline{ u(\gamma ) u(\gamma' ) } \sigma(\gamma, \gamma') \pi(\gamma  \gamma') \\
&=   \frac{1}{ \vert \Lambda \vert^2 } \sum_{\gamma, \gamma' \in \Lambda} \overline{  u(\gamma \gamma') } \pi(\gamma  \gamma')
=  \frac{1}{ \vert \Lambda \vert  } \sum_{\gamma  \in \Lambda} \overline{  u(\gamma  ) } \pi(\gamma  ) = p.
\end{align*}
For $p$ being self-adjoint, the assumption that $\sigma$ is a coboundary on $\Lambda \times \Lambda$
yields that
\[
\begin{split}
(  \overline{ u(\gamma) }  \pi(\gamma)  )^\ast = &  u(\gamma) \pi(\gamma)^\ast = u(\gamma) \overline{\sigma(\gamma, \gamma^{-1})} \pi(\gamma^{-1} ) \\
 = &   u(\gamma)  \overline{ u(\gamma) u(\gamma^{-1} ) } u(e)  \pi(\gamma^{-1} )   =  \overline{   u(\gamma^{-1} ) }   \pi(\gamma^{-1} ),
\end{split}
\]
for $\gamma \in \Lambda$. Hence,
\[
p^\ast =   \frac{1}{ \vert \Lambda \vert } \sum_{\gamma \in \Lambda } ( \overline{ u(\gamma ) }  \pi(\gamma) ) ^\ast =
 \frac{1}{ \vert \Lambda \vert } \sum_{\gamma \in \Lambda}   \overline{ u(\gamma^{-1})}     \pi(\gamma^{-1} )  = p.
\]
This concludes the proof.
\end{proof}

\begin{proposition} \label{prop:TAction}
 Let $(\pi, \Hpi)$ be a discrete series $\sigma$-representation of $G$ and let $\Gamma \leq G$ be a lattice. Then for every $T \in \pi(\Gamma)''$, there exists $c \in \ell^2 (\Gamma)$ such that
 \begin{align} \label{eq:TAction}
  T \eta = \sum_{\gamma \in \Gamma} c_{\gamma} \pi(\gamma) \eta, \quad \text{for} \quad \eta \in \cB_{\pi}.
 \end{align}
In addition, suppose that $\Lambda \subseteq \Gamma$ is a finite subgroup such that there exists $u: \Lambda \rightarrow \mathbb{C}$ for which $\sigma(\gamma, \gamma') = u(\gamma) u(\gamma') \overline{u(\gamma \gamma')}$ for all $\gamma, \gamma' \in \Lambda$. Let
\[
p = \frac{1}{|\Lambda|} \sum_{\gamma' \in \Lambda}  \overline{u(\gamma')} \pi(\gamma').
\]
Then, if  $c = (c_\gamma)_{\gamma \in \Gamma} \in \ell^2(\Gamma)$ is a sequence satisfying \eqref{eq:TAction} for $T \in \pi(\Gamma)''$, then the sequence $c' = (c_\gamma' )_{\gamma \in \Gamma}$ defined by
\[
c'_\gamma = \frac{1}{|\Lambda|} \sum_{\gamma' \in \Lambda} c_{\gamma \gamma'} \overline{ u((\gamma')^{-1}) } \sigma(\gamma \gamma', (\gamma')^{-1}),
\]
 satisfies $Tp \eta = \sum_{\gamma \in \Gamma}  c_\gamma'  \pi(\gamma) \eta$ for all $\eta \in \cB_\pi$
\end{proposition}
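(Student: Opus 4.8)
The plan is to prove the two assertions in turn: the first is the conceptual core, and the second is a cocycle-indexed computation built on it.

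For the existence statement in \eqref{eq:TAction}, I would fix a Bessel vector $\eta \in \cB_{\pi}$ and work with the synthesis operator $C_{\eta}^{\ast} : \ell^2(\Gamma) \to \Hpi$, the (bounded) adjoint of $C_{\eta}$, which satisfies $C_{\eta}^{\ast} c = \sum_{\gamma} c_{\gamma} \pi(\gamma)\eta$ and $C_{\eta}^{\ast}\delta_e = \eta$. Taking adjoints in the intertwining relation $\lambda_{\Gamma}^{\sigma}(\gamma) C_{\eta} = C_{\eta}\pi(\gamma)$, and using that $\pi(\gamma)^{\ast}$ and $\lambda_{\Gamma}^{\sigma}(\gamma)^{\ast}$ are the same unimodular multiples of $\pi(\gamma^{-1})$ and $\lambda_{\Gamma}^{\sigma}(\gamma^{-1})$, yields the dual intertwining $C_{\eta}^{\ast}\lambda_{\Gamma}^{\sigma}(\gamma) = \pi(\gamma) C_{\eta}^{\ast}$ for all $\gamma \in \Gamma$. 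Next, using the isometry $U$ of Lemma \ref{lem:intertwiner} I would define a unital normal $\ast$-homomorphism $\Phi : \VN \to B(\Hpi)$ by $\Phi(x) = U^{\ast}\big(\bigoplus_{i \in I} x\big) U$. Since $U$ intertwines $\pi|_{\Gamma}$ with $\bigoplus_{i \in I}\lambda_{\Gamma}^{\sigma}$, the projection $UU^{\ast}$ lies in the commutant of $\bigoplus_{i \in I}\lambda_{\Gamma}^{\sigma}(\Gamma)$ and hence commutes with the diagonal amplifications, making $\Phi$ multiplicative; moreover $\Phi(\lambda_{\Gamma}^{\sigma}(\gamma)) = \pi(\gamma)$. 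As $\Phi$ is normal and $\lambda_{\Gamma}^{\sigma}(\Gamma)$ generates $\VN$, the range of $\Phi$ is the von Neumann algebra $\pi(\Gamma)''$.

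I would then upgrade the dual intertwining to $C_{\eta}^{\ast} x = \Phi(x) C_{\eta}^{\ast}$ for all $x \in \VN$. The set of $x$ satisfying this identity is a subalgebra of $\VN$ that is closed in the strong operator topology (because $C_{\eta}^{\ast}$ is bounded and $\Phi$ is normal) and contains each $\lambda_{\Gamma}^{\sigma}(\gamma)$ together with its adjoint; being an SOT-closed subalgebra containing this self-adjoint generating family, it equals all of $\VN$. Given $T \in \pi(\Gamma)''$, surjectivity of $\Phi$ furnishes $x \in \VN$ with $\Phi(x) = T$, and setting $c := x\delta_e \in \ell^2(\Gamma)$ gives $T\eta = \Phi(x) C_{\eta}^{\ast}\delta_e = C_{\eta}^{\ast} x\delta_e = C_{\eta}^{\ast} c = \sum_{\gamma} c_{\gamma}\pi(\gamma)\eta$. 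The point is that $x$, and thus $c$, depends only on $T$ and not on the Bessel vector $\eta$, which is precisely \eqref{eq:TAction}.

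For the addendum I would first observe that each $\pi(\gamma')\eta$ with $\gamma' \in \Lambda$ is again a Bessel vector, so \eqref{eq:TAction} applies to it. Expanding $Tp\eta = \frac{1}{|\Lambda|}\sum_{\gamma' \in \Lambda}\overline{u(\gamma')}\, T\pi(\gamma')\eta$ and substituting $T\pi(\gamma')\eta = \sum_{\gamma} c_{\gamma}\pi(\gamma)\pi(\gamma')\eta = \sum_{\gamma} c_{\gamma}\sigma(\gamma,\gamma')\pi(\gamma\gamma')\eta$, I would reindex by $\beta = \gamma\gamma'$ and interchange the finite sum over $\Lambda$ with the norm-convergent series. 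Collecting the coefficient of $\pi(\beta)\eta$ and then replacing $\gamma'$ by $(\gamma')^{-1}$ in the resulting sum over the group $\Lambda$ reproduces exactly the stated formula for $c'_{\gamma}$; since $c'$ is a finite average of unimodularly weighted translates of $c$, it lies in $\ell^2(\Gamma)$, and the identity $Tp\eta = \sum_{\gamma} c'_{\gamma}\pi(\gamma)\eta$ is consistent with $Tp \in \pi(\Gamma)''$ (recall $p \in \pi(\Gamma)''$ by Lemma \ref{lem:projection}).

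I expect the main obstacle to be the first assertion: producing a single coefficient sequence $c$ that implements the abstract operator $T$ simultaneously for every Bessel vector. This is what forces the use of the intertwiner of Lemma \ref{lem:intertwiner} and the homomorphism $\Phi$, rather than an $\eta$-dependent ad hoc choice such as $c = C_{\eta} T\eta$. By contrast, the addendum is a purely algebraic cocycle computation with no analytic subtlety, because $\Lambda$ is finite and all rearrangements involved are over a finite set.
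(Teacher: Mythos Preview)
Your proposal is correct and follows essentially the same approach as the paper: both use Lemma~\ref{lem:intertwiner} to realize $\pi(\Gamma)''$ as the image of $\VN$ under a normal $*$-homomorphism and then pass from the generators $\lambda_{\Gamma}^{\sigma}(\gamma)$ to arbitrary elements by a density argument (the paper via explicit Kaplansky approximation and $\ell^2$-convergence of coefficients, you via the synthesis-operator intertwining $C_\eta^\ast x = \Phi(x) C_\eta^\ast$ and SOT-closure). For the addendum the paper computes $Tq$ on the $\VN$ side and then invokes the first part, whereas you apply the first part directly to the Bessel vectors $\pi(\gamma')\eta$ and reindex; both routes are routine cocycle manipulations and produce the same $c'$.
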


 \begin{proof}
By Lemma \ref{lem:intertwiner}, it may be assumed that $\Hpi$ is a closed subspace of $\oplus_{i \in I} \ell^2(\Gamma)$ and that  $\pi|_{\Gamma}$ equals $\oplus_{i \in I} \lambda_{\Gamma}^{\sigma}$ for some index set $I$.
 Note that the Bessel vectors of $\pi$ remain Bessel vectors under this identification.  Let $p_\pi$ be the orthogonal projection of $\oplus_{i \in I} \ell^2(\Gamma)$  onto $\Hpi$. Then
\begin{align*}
\pi(\Gamma)'' &=   \big(\{ \oplus_{i \in I} \lambda_{\Gamma}^{\sigma} (\gamma) \mid \gamma \in \Gamma \} p_\pi \big) ''
 =
 \{ \oplus_{i \in I} \lambda_{\Gamma}^{\sigma} (\gamma) \mid \gamma \in \Gamma \}'' p_\pi \\
 &=
 \{ \oplus_{i \in I} T \mid T \in \VN \} p_\pi.
\end{align*}
Let  $\eta \in \cB_\pi$   and let $T \in \VN$. By Kaplansky's density theorem, there exists a bounded net $(T_{\alpha})_\alpha$ in $ {\rm span} \: \lambda_{\Gamma}^{\sigma} (\Gamma)$ that converges to $T$ strongly.  It follows, using boundedness of the net in case $I$ is infinite,  that $\oplus_{i \in I} T_\alpha \rightarrow \oplus_{i \in I} T$ strongly.
 The operators $T$ and $T_{\alpha}$ can be written as
  \begin{align} \label{eq:expand}
      T = \sum_{\gamma \in \Gamma} c_{\gamma} \lambda_{\Gamma}^{\sigma} (\gamma), \qquad   T_\alpha = \sum_{\gamma \in \Gamma} c_{\alpha, \gamma} \lambda_{\Gamma}^{\sigma} (\gamma),
  \end{align}
  where each $(c_{\alpha, \gamma})_{\gamma \in \Gamma}$ is finite supported and the first sum converges in the $L^2$-topology (see Section \ref{Sect=GNS}).
 We have
   \[
\lim_{\alpha} \Vert   (c_{\alpha, \gamma})_\gamma - (c_\gamma)_\gamma \Vert_{\ell^2} =  \lim_{\alpha}
  \Vert (T - T_\alpha) \delta_e \Vert_{\ell^2} = 0.
   \]
   Therefore, for $\eta \in \cB_\pi$ and $\eta' \in \oplus_{i \in I} \ell^2(\Gamma)$, using that $ (\langle \pi(\gamma) \eta, \eta' \rangle)_{\gamma \in \Gamma} \in \ell^2(\Gamma)$, it follows that
  \[
      \langle (\oplus_{i \in I }T_\alpha) \eta, \eta' \rangle = \sum_{\gamma \in \Gamma} c_{\alpha, \gamma}  \langle \oplus_{i \in I} \lambda_{\Gamma}^{\sigma} (\gamma) \eta, \eta' \rangle =  \sum_{\gamma \in \Gamma} c_{\alpha, \gamma}  \langle \pi(\gamma) \eta, \eta' \rangle \rightarrow \sum_{\gamma \in \Gamma} c_\gamma \langle \pi(\gamma) \eta, \eta' \rangle.
  \]
  On the other hand, certainly  $\langle (\oplus_{i \in I }T_\alpha) \eta, \eta' \rangle \rightarrow \langle (\oplus_{i \in I } T) \eta, \eta' \rangle$ as $(\oplus_{i \in I }T_\alpha) \rightarrow (\oplus_{i \in I }T)$ strongly.
  This shows that $\sum_{\gamma \in \Gamma} c_{\gamma} \pi(\gamma) \eta = (\oplus_{i \in I } T) \eta$ for every $\eta \in \cB_\pi$.

  For the additional part, suppose that $\sigma(\gamma, \gamma') = u(\gamma) u(\gamma') \overline{u(\gamma \gamma')}$ for all $\gamma, \gamma' \in \Lambda$. Define the projection $q = \vert \Lambda \vert^{-1} \sum_{\gamma' \in \Lambda}  \overline{u(\gamma')} \lambda_{\Gamma}^{\sigma}(\gamma')$.
    Since there exists an equivalence of $\sigma$-representations $\pi \simeq (\oplus_{i \in I} \lambda_{\Gamma}^{\sigma})  p_\pi$, it follows that $p \simeq  (\oplus_{i \in I} q)  p_\pi$.  Therefore, $(\oplus_{i \in I }T) p  \simeq (\oplus_{i \in I } T q) p_\pi$, with
\begin{align*}
    T q  = & \frac{1}{|\Lambda|} \sum_{\gamma \in \Gamma} \sum_{\gamma' \in \Lambda}  c_\gamma \overline{u(\gamma')} \lambda_{\Gamma}^{\sigma} (\gamma)\lambda_{\Gamma}^{\sigma}(\gamma')
    = \frac{1}{|\Lambda|}
     \sum_{\gamma \in \Gamma} \sum_{\gamma' \in \Lambda}  c_\gamma   \overline{u(\gamma')} \sigma(\gamma, \gamma')  \lambda_{\Gamma}^{\sigma} (\gamma \gamma') \\
  = &  \frac{1}{|\Lambda|} \sum_{\gamma \in \Gamma} \sum_{\gamma' \in \Lambda}  c_{\gamma  \gamma'} \overline{u((\gamma')^{-1})} \sigma(\gamma \gamma', (\gamma')^{-1})  \lambda_{\Gamma}^{\sigma} (\gamma).
\end{align*}
The proof can now be completed as follows. The previous equality gives an expansion for $Tq$ of the form \eqref{eq:expand} but with coefficients $c_\gamma$ replaced by $c_\gamma'  = \frac{1}{|\Lambda|}\sum_{\gamma' \in \Lambda}  c_{\gamma  \gamma'} \overline{u((\gamma')^{-1})} \sigma(\gamma \gamma', (\gamma')^{-1})$. Repeating the first part of the proof again for this expansion yields the desired result.
  \end{proof}

The proof of the first part of Proposition \ref{prop:TAction} is an adaption of \cite[Theorem 5.1]{romero2020density}, and provides an extension of that result to possibly non-factor von Neumann algebras. The additional part will play an important role below

\subsection{Hilbert modules} \label{sec:hilbert_lattices}
If $(\pi, \Hpi)$ is a discrete series $\sigma$-representation and $\eta \in \Hpi$ is a unit vector, then the orthogonality relations \eqref{eq:ortho} imply that
the linear mapping
\[
 V_{\eta} : \Hpi \to L^2 (G), \quad V_{\eta} f (x) = d^{1/2}_{\pi} \langle f, \pi(x) \eta \rangle
\]
is an isometry that interwines $\pi$ and the twisted left-regular representation $\lambda_G^{\sigma}$ on $L^2 (G)$, defined by
\[
 \lambda_G^{\sigma} (y) F(x) = \sigma(y, y^{-1} x) F(y^{-1} x), \quad x,y \in G.
\]
In particular, this shows that $\pi$ is unitarily equivalent to a subrepresentation of $\lambda_G^{\sigma}$.

For a lattice $\Gamma \leq G$, the restriction $\lambda_G^{\sigma} |_{\Gamma}$ is unitarily equivalent to the tensor product $\lambda_{\Gamma}^{\sigma} \otimes \id$ acting on $\ell^2 (\Gamma) \otimes L^2 (F)$, where $F \subset G$ is a fundamental domain of $\Gamma \leq G$. Therefore, the
von Neumann algebra $\lambda_G^{\sigma} (\Gamma)'' \subset B(L^2 (G))$ is isomorphic to $\VN$,
and the image space $V_{\eta} (\Hpi) \leq L^2 (G)$ is a submodule of the Hilbert $\VN$-module $L^2 (G) \cong \ell^2 (\Gamma) \otimes L^2 (F)$.
The representation space $\Hpi$ forms thus a Hilbert $\VN$-module with action defined by
\[
 T f = V_{\eta}^* T V_{\eta} f
\]
for $T \in \VN$ and $f \in \Hpi$.

The following result summarizes parts of the above discussion and determines the center-valued von Neumann dimension of $\Hpi$ as a Hilbert $\VN$-module, cf.
\cite[Theorem 1]{bekka2004square} and \cite[Theorem 1.3]{enstad2021density}
for further details.

\begin{theorem}[\cite{bekka2004square, enstad2021density}] \label{thm:cdim_unimodular}
Let $(\pi, \Hpi)$ be a discrete series $\sigma$-representation of $G$ of formal dimension $d_{\pi} > 0$. Suppose $\Gamma \leq G$ is a lattice. Then $\pi|_{\Gamma}$ extends to give $\Hpi$ the structure of a Hilbert $\vN$-module.
The center-valued von Neumann dimension $\cdim_{\vN} (\Hpi)$ of $\Hpi$ is given by
the $\sigma$-twisted convolution operator on $\ell^2 (\Gamma)$,
\[
f \mapsto \phi \ast_{\sigma} f := \sum_{\gamma' \in \Gamma} \phi (\gamma') \lambda_{\Gamma}^{\sigma} (\gamma') f
\]
with kernel
\begin{align*}
\phi (\gamma)
=
\begin{cases}
\frac{d_{\pi}}{|C_{\gamma}|} \int_{G/Z(\gamma)} \overline{\sigma(\gamma, y)} \sigma(y, y^{-1} \gamma y) \langle \eta, \pi(y^{-1} \gamma y) \eta \rangle \; d (y Z(\gamma)),
& \text{if $C_{\gamma}$ is finite $\sigma$-regular;} \\
0, & \text{otherwise},
\end{cases}
\end{align*}
where $\eta \in \Hpi$ is a unit vector and $C_{\gamma}$ (resp. $Z(\gamma))$ denotes the conjugacy class (resp. centralizer) of $\gamma$ in $\Gamma$.
\end{theorem}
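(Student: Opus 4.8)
The plan is to compute the center-valued dimension directly from its definition, $\cdim_{\vN}(\Hpi) = \Phi(P)$, where $P$ is the projection onto a realization of $\Hpi$ inside $L^2(G) \cong \ell^2(\Gamma) \otimes L^2(F)$ and $\Phi$ is the center-valued trace on the commutant of $\vN$. First I would fix a unit vector $\eta \in \Hpi$ and use the isometry $V_{\eta}$ of Section \ref{sec:hilbert_lattices} to identify $\Hpi$ with the submodule $V_{\eta}(\Hpi) \leq L^2(G)$. Since $V_{\eta}$ intertwines $\pi|_{\Gamma}$ with $\lambda_G^{\sigma}|_{\Gamma}$, the orthogonal projection $P = V_{\eta} V_{\eta}^{*}$ lies in $\lambda_G^{\sigma}(\Gamma)'$. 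Under the unitary equivalence $\lambda_G^{\sigma}|_{\Gamma} \cong \lambda_{\Gamma}^{\sigma} \otimes \id$ on $\ell^2(\Gamma) \otimes L^2(F)$ recalled before the theorem, the group algebra becomes $\vN \barotimes \mathbb{C}$ and its commutant becomes $(\vN)' \barotimes B(L^2(F))$, on which the semi-finite center-valued trace factors as $\ctr_{(\vN)'} \otimes \Tr$; here $(\vN)'$ is the (finite) right twisted regular algebra and $\Tr$ is the usual trace on $B(L^2(F))$.

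Next I would make $P$ explicit. A short computation using the definition of $V_{\eta}$ together with the orthogonality relations \eqref{eq:ortho} shows that $P$ is the integral operator on $L^2(G)$ with reproducing kernel
\[
K(y,x) = d_{\pi} \langle \pi(x)\eta, \pi(y)\eta \rangle, \quad x,y \in G.
\]
Writing $x = \gamma s$ and $y = \gamma' t$ with $\gamma, \gamma' \in \Gamma$ and $s,t \in F$ transports $P$ into a $\Gamma \times \Gamma$ matrix of operators on $L^2(F)$, and applying $\Tr$ over the $L^2(F)$ factor integrates the kernel over the diagonal $t = s$, producing the operator on $\ell^2(\Gamma)$ with entries $\int_F K(\gamma' s, \gamma s)\, ds$, an element of $(\vN)'$. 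As a sanity check, its $(e,e)$-entry is $\int_F K(s,s)\, ds = d_{\pi}\|\eta\|^2\vol(F) = d_{\pi}\vol(G/\Gamma)$, which is precisely the scalar dimension of Theorem \ref{thm:density_intro} recovered in the factor case.

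The core of the argument is then to identify $\ctr_{(\vN)'}$ of this operator, an element of $\cZ((\vN)') = \cZ(\vN)$, with the twisted convolution operator of kernel $\phi$. I would use the standard fact that the center-valued trace of a twisted group von Neumann algebra is supported on the finite $\sigma$-regular conjugacy classes, sending $\lambda_{\Gamma}^{\sigma}(\gamma)$ to the class average $|C_{\gamma}|^{-1} \sum_{\gamma' \in C_{\gamma}} (\cdots) \lambda_{\Gamma}^{\sigma}(\gamma')$ when $C_{\gamma}$ is finite and $\sigma$-regular, and to $0$ otherwise. Extracting the coefficient $\phi(\gamma)$ by pairing with $\tau$ and unfolding the fundamental-domain integral $\int_F K(\gamma' s, \gamma s)\, ds$ through the $G$-invariance of Haar measure turns the average over $C_{\gamma}$ into the integral over $G/Z(\gamma)$, the stabilizer $Z(\gamma)$ appearing because the conjugation $y \mapsto y^{-1}\gamma y$ factors through $G/Z(\gamma)$. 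This yields exactly the matrix coefficient $\langle \eta, \pi(y^{-1}\gamma y)\eta\rangle$ together with the normalization $d_{\pi}/|C_{\gamma}|$.

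The main obstacle I anticipate is the cocycle bookkeeping. One must track the factors of $\sigma$ introduced when passing between $\lambda_{\Gamma}^{\sigma}$, its commutant, the modular conjugation used to identify $\cZ((\vN)')$ with $\cZ(\vN)$, and the conjugation $\gamma \mapsto y^{-1}\gamma y$; these are exactly what assemble into the product $\overline{\sigma(\gamma, y)}\,\sigma(y, y^{-1}\gamma y)$ in the statement, and keeping them consistent with the conventions \eqref{eq:cocycle_rep}--\eqref{eq:cocycle_identity} is the delicate part. The remaining measure-theoretic steps, namely justifying the interchange of trace and integral and the convergence of $\phi$ in $\ell^2(\Gamma)$, are routine and can be carried out as in \cite{bekka2004square, enstad2021density}.
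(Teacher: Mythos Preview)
The paper does not actually prove this theorem: it is quoted from \cite{bekka2004square} and \cite{enstad2021density} and only the surrounding module-theoretic setup (the isometry $V_{\eta}$ and the identification $L^2(G)\cong \ell^2(\Gamma)\otimes L^2(F)$) is spelled out in Section~\ref{sec:hilbert_lattices}. So there is no in-paper proof to compare against; what you have written is essentially an outline of the argument in those references.

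Your plan is sound and follows the standard route: realize $\Hpi$ inside $L^2(G)$ via $V_{\eta}$, compute $P=V_{\eta}V_{\eta}^{*}$ as the integral operator with kernel $d_{\pi}\langle \pi(x)\eta,\pi(y)\eta\rangle$, pass to $\ell^2(\Gamma)\otimes L^2(F)$ so that $\Phi=\ctr_{(\vN)'}\otimes\Tr$, and then use Kleppner's description of the center of a twisted group von Neumann algebra to isolate the finite $\sigma$-regular classes. The sanity check recovering $d_{\pi}\vol(G/\Gamma)$ at the identity is exactly the Atiyah--Schmid scalar dimension, as it should be. The one place your sketch is genuinely thin is the passage from the discrete average over $C_{\gamma}\subset\Gamma$ to the continuous integral over $G/Z(\gamma)$: this unfolding uses that $\Gamma\backslash G$ has finite volume together with a Weil-type disintegration, and it is here (not only in the $\sigma$-bookkeeping) that the measure normalizations and the precise factor $d_{\pi}/|C_{\gamma}|$ have to be pinned down carefully. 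If you intend to write this out in full rather than cite it, that step deserves an explicit lemma.
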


The center-valued von Neumann dimension will be computed for semi-simple Lie groups in Section \ref{eq:cocycle_lattice}.

 \subsection{Frames and Riesz sequences}
Let $\eta \in \Hpi \setminus \{0\}$. A family $(\pi(\gamma) \eta )_{\gamma \in \Gamma}$ is a \emph{frame} for $\Hpi$ if
\begin{align} \label{eq:frame_ineq}
A \| f \|_{\Hpi}^2 \leq \sum_{\gamma \in \Gamma} |\langle f, \pi(\gamma) \eta \rangle|^2 \leq B \| f \|_{\Hpi}^2, \quad f \in \Hpi.
\end{align}
for some constants $0 < A \leq B < \infty$.
If $(\pi(\gamma) \eta)_{\gamma \in \Gamma}$ is a frame for $\Hpi$, then $\eta$ is clearly a cyclic vector for $\pi|_{\Gamma}$.
A family satisfying the upper bound in \eqref{eq:frame_ineq} is called a \emph{Bessel sequence} in $\Hpi$.

A family $(\pi(\gamma) \eta )_{\gamma \in \Gamma}$ is a \emph{Riesz sequence} in $\Hpi$ if there exist constants $0 < A \leq B < \infty$ such that
\[
  A \| c \|_{\ell^2}^2 \leq \bigg\| \sum_{\gamma \in \Gamma} c_{\gamma} \pi (\gamma) \eta \bigg\|_{\Hpi}^2 \leq B \| c \|_{\ell^2}^2, \quad c = (c_{\gamma})_{\gamma \in \Gamma} \in \ell^2 (\Gamma).
\]
In particular, a Riesz sequence $(\pi(\gamma) \eta )_{\gamma \in \Gamma}$ is a Bessel sequence and $\mathbb{C}$-linearly independent.
Hence, if the projective $\Gamma$-stabilizer of $\eta$,
\[ \Gamma_{[\eta]} := \big\{ \gamma \in \Gamma : \pi(\gamma) \eta \in \mathbb{C} \eta \big\} \]
is non-trivial, then $(\pi(\gamma) \eta )_{\gamma \in \Gamma}$ cannot be a Riesz sequence. For this reason, also the system $(\pi (\gamma) \eta )_{\gamma \in \Lambda_{\eta}}$, where $\Lambda_{\eta}$ is a set of representatives of $\Gamma / \Gamma_{[\eta]}$ will be considered. It is said to be a \emph{Riesz sequence} if
\[
 A \| c \|_{\ell^2}^2 \leq \bigg\| \sum_{\gamma \in \Lambda_{\eta}} c_{\gamma} \pi (\gamma) \eta \bigg\|_{\Hpi}^2 \leq B \| c \|_{\ell^2}^2, \quad c = (c_{\gamma})_{\gamma \in \Lambda_{\eta}} \in \ell^2 (\Lambda_{\eta}).
\]
Note that if  $(\pi (\gamma) \eta )_{\gamma \in \Lambda_{\eta}}$ is a Riesz sequence for some family $\Lambda_{\eta}$ of coset representatives of $\Gamma / \Gamma_{\eta}$, then it is a Riesz sequence for all such families.

\begin{proposition} \label{prop:riesz_separating}
Let $(\pi, \Hpi)$ be a discrete series $\sigma$-representation of $G$ and let $\Gamma \leq G$ be a lattice. Suppose that $\eta \in \Hpi \setminus \{0\}$ is such that $\Gamma_{[\eta]}$ is finite and let $p_{\eta} := |\Gamma_{[\eta]} |^{-1} \sum_{\gamma \in \Gamma_{[\eta]}} \overline{u(\gamma)} \pi(\gamma)$, where $u : \Gamma_{[\eta]} \to \mathbb{C}$ is such that $\pi(\gamma) \eta = u(\gamma) \eta$. Let $\Lambda_{\eta}$ be coset representatives of $\Gamma / \Gamma_{[\eta]}$.

If $(\pi(\gamma) \eta)_{\gamma \in \Lambda_{\eta}}$ is a Riesz sequence in $\Hpi$, then $\eta$ is a $ p_{\eta}$-separating vector.
\end{proposition}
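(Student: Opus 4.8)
The plan is to verify the characterization of $p_{\eta}$-separating recorded in Section~\ref{sec:vN}, namely that if $a \in \pi(\Gamma)'' p_{\eta}$ satisfies $a\eta = 0$, then $a = 0$. Before doing so I would dispose of the preliminaries. Since each $\pi(\gamma)$ is unitary and $\pi(\gamma)\eta = u(\gamma)\eta$ with $\eta \neq 0$, the map $u$ takes values in $\mathbb{T}$, and comparing $\pi(\gamma)\pi(\gamma')\eta$ with $\sigma(\gamma,\gamma')\pi(\gamma\gamma')\eta$ shows that $\sigma$ restricted to $\Gamma_{[\eta]}\times\Gamma_{[\eta]}$ is the coboundary of $u$. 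Hence Lemma~\ref{lem:projection} applies and $p_{\eta}$ is genuinely a projection in $\pi(\Gamma)''$; a direct check gives $p_{\eta}\eta = \eta$, so $\eta \in p_{\eta}\Hpi$ as required for the statement to make sense. I would also note that $\eta$ is a Bessel vector: for $\beta \in \Gamma_{[\eta]}$ the vector $\pi(\lambda\beta)\eta$ is a unimodular multiple of $\pi(\lambda)\eta$, so the Bessel sum over $\Gamma$ equals $|\Gamma_{[\eta]}|$ times the Bessel sum over $\Lambda_{\eta}$, which is finite because a Riesz sequence is Bessel. Thus $\eta \in \cB_{\pi}$ and Proposition~\ref{prop:TAction} is applicable to it.

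Now write a general element of $\pi(\Gamma)'' p_{\eta}$ as $a = T p_{\eta}$ with $T \in \pi(\Gamma)''$, and assume $a\eta = 0$. Expanding $T\xi = \sum_{\gamma} c_{\gamma}\pi(\gamma)\xi$ as in Proposition~\ref{prop:TAction}, the additional part of that proposition (taken with $\Lambda = \Gamma_{[\eta]}$) yields $a\xi = \sum_{\gamma} c'_{\gamma}\pi(\gamma)\xi$ for all $\xi \in \cB_{\pi}$, with the coefficients $c'$ given there explicitly. The \emph{technical heart} of the proof is the covariance identity
\[
c'_{\gamma h} \;=\; \overline{u(h)}\,\sigma(\gamma,h)\,c'_{\gamma}, \qquad \gamma \in \Gamma,\; h \in \Gamma_{[\eta]}.
\]
I would establish this straight from the defining formula for $c'$: replacing $\gamma$ by $\gamma h$ and substituting $\beta = h\gamma'$ for the summation variable, the factor $\overline{u(\beta^{-1}h)}\,\sigma(\gamma\beta,\beta^{-1}h)$ attached to $c_{\gamma\beta}$ can be rewritten, via the $2$-cocycle identity \eqref{eq:cocycle_identity} applied to $(\gamma\beta,\beta^{-1},h)$ together with the coboundary relation $\sigma = \partial u$ on $\Gamma_{[\eta]}$, as the product $\overline{u(h)}\,\sigma(\gamma,h)\cdot\overline{u(\beta^{-1})}\,\sigma(\gamma\beta,\beta^{-1})$. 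The $\beta$-independent factor $\overline{u(h)}\sigma(\gamma,h)$ then pulls out of the sum, and what remains is exactly $c'_{\gamma}$. This cocycle bookkeeping is the only genuinely delicate point; everything else is an application of the earlier results.

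With the covariance in hand the conclusion is immediate. Grouping $\Gamma = \bigsqcup_{\lambda \in \Lambda_{\eta}} \lambda\Gamma_{[\eta]}$ and using both $\pi(\lambda\beta)\eta = \overline{\sigma(\lambda,\beta)}\,u(\beta)\,\pi(\lambda)\eta$ and the covariance $c'_{\lambda\beta} = \overline{u(\beta)}\sigma(\lambda,\beta)c'_{\lambda}$, each inner term collapses to $c'_{\lambda}\pi(\lambda)\eta$, so that
\[
0 \;=\; a\eta \;=\; |\Gamma_{[\eta]}| \sum_{\lambda \in \Lambda_{\eta}} c'_{\lambda}\,\pi(\lambda)\eta.
\]
Since $(c'_{\lambda})_{\lambda} \in \ell^2(\Lambda_{\eta})$ and $(\pi(\lambda)\eta)_{\lambda \in \Lambda_{\eta}}$ is a Riesz sequence, its synthesis operator is bounded below and hence injective, forcing $c'_{\lambda} = 0$ for every $\lambda \in \Lambda_{\eta}$. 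The covariance identity then gives $c'_{\lambda\beta} = 0$ for all $\beta \in \Gamma_{[\eta]}$ as well, so $c' \equiv 0$ on $\Gamma$. Consequently $a\xi = \sum_{\gamma} c'_{\gamma}\pi(\gamma)\xi = 0$ for every $\xi$ in the dense set $\cB_{\pi}$, and boundedness of $a$ yields $a = 0$. This proves that $\eta$ is a $p_{\eta}$-separating vector.
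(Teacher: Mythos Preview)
Your proof is correct and follows essentially the same approach as the paper: both rely on Proposition~\ref{prop:TAction} (in particular its additional part) to obtain an $\ell^2$-coefficient sequence $c'$ for $Tp_\eta$, reduce via the cocycle identity~\eqref{eq:cocycle_identity} and the coboundary relation on $\Gamma_{[\eta]}$, and then invoke the Riesz lower bound to annihilate the coefficients. The one organizational difference is that the paper first works with the coefficients $c$ of $T$, defines periodized coefficients $d_\gamma$, uses a ``shifted Riesz sequence'' argument (its Step~2, replacing $\Lambda_\eta$ by $\Lambda_\eta\nu$) to conclude $d_\gamma=0$ for \emph{all} $\gamma\in\Gamma$, and only then identifies $c'_\gamma$ with $|\Gamma_{[\eta]}|^{-1}d_\gamma$; you instead establish the covariance identity $c'_{\gamma h}=\overline{u(h)}\sigma(\gamma,h)c'_\gamma$ directly and use it both to collapse the sum over each coset and to propagate the vanishing from $\Lambda_\eta$ to all of $\Gamma$. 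Your route is slightly more streamlined (it absorbs Step~2 into the covariance identity), but the underlying cocycle computation---the application of~\eqref{eq:cocycle_identity} to the triple $(\gamma\beta,\beta^{-1},h)$---is exactly the same in both arguments.
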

\begin{proof}
For $\gamma, \gamma' \in \Gamma_{[\eta]}$, it follows that $\pi(\gamma \gamma') \eta = u(\gamma \gamma') \eta$, and
\[
\pi(\gamma \gamma') \eta = \overline{\sigma(\gamma, \gamma')} \pi(\gamma) \pi(\gamma') \eta = \overline{\sigma(\gamma, \gamma')} u(\gamma) u(\gamma') \eta.
\]
Hence, $\sigma|_{\Gamma_{[\eta]} \times \Gamma_{[\eta]}}$ forms a coboundary of the form $\sigma(\gamma, \gamma') = u(\gamma) u(\gamma') \overline{u(\gamma \gamma')}$ for $\gamma, \gamma' \in \Gamma_{[\eta]}$.
 Hence, $p_{\eta}$ is a projection by Lemma \ref{lem:projection}.
The proof will be split into three steps.

\textbf{Step 1.} \emph{(Bessel vector)}.
Let $c = (c_{\gamma})_{\gamma \in \Gamma} \in \ell^2 (\Gamma)$. Since $\pi( \gamma' ) \eta  = u(\gamma') \eta$ for all $\gamma' \in \Gamma_{[\eta]}$, it follows that
\begin{align} \label{eq:periodization}
 \sum_{\gamma \in \Gamma}  c_\gamma \pi(\gamma) \eta = \sum_{\gamma \in \Lambda_{\eta}} \sum_{\gamma' \in \Gamma_{[\eta]}}  c_{\gamma \gamma'} u(\gamma') \overline{\sigma(\gamma, \gamma')} \pi(\gamma) \eta =
 \sum_{\gamma \in \Lambda_{\eta}} d_{\gamma} \pi(\gamma) \eta,
\end{align}
where $d_{\gamma} := \sum_{\gamma' \in \Gamma_{[\eta]}} c_{\gamma \gamma'} u(\gamma') \overline{\sigma(\gamma, \gamma')}$ for $\gamma \in \Lambda_{\eta}$. An application of Cauchy-Schwarz yields that $\| d \|_{\ell^2 (\Lambda_{\eta})} \leq |\Gamma_{[\eta]} |^{\frac{1}{2}} \| c \|_{\ell^2(\Gamma)}$.
Therefore, if $\pi(\Lambda_{\eta}) \eta$ is a Riesz sequence in $\Hpi$ with upper bound $B > 0$,
then, in particular,
 \begin{align*}
   \bigg\| \sum_{\gamma \in \Gamma}  c_\gamma \pi(\gamma) \eta \bigg\|^2_{\Hpi}
   &=
      \bigg\| \sum_{\gamma \in \Lambda_{\eta}} d_{\gamma} \pi(\gamma) \eta \bigg\|^2_{\Hpi} \leq B \| d\|_{\ell^2 (\Lambda_{\eta})}^2 \leq |\Gamma_{[\eta]} |^{\frac{1}{2}} B \| c \|^2_{\ell^2},
   \end{align*}
showing that $\pi(\Gamma) \eta$ is a Bessel sequence, i.e., $\eta \in \mathcal{B}_{\pi}$.

\textbf{Step 2.} \emph{(Shifted Riesz sequence).}
Let $\Lambda_{\eta}$ be any set of representatives of the equivalence classes $\Gamma \slash \Gamma_{[\eta]}$ such that $\pi(\Lambda_{\eta}) \eta$ is a Riesz sequence. Note that if   $\nu \in \Gamma_{[\eta]}$, then also $\Lambda_{\eta} \nu$ is a set of representatives of $\Gamma \slash \Gamma_{[\eta]}$ such that
  \[
  \pi(\Lambda_{\eta} \nu) \eta = (  \overline{ \sigma(\gamma, \nu)} u(\nu)  \pi(\gamma) \eta )_{\gamma \in \Lambda_{\eta}},
  \]
  is a Riesz sequence. This observation allows to apply the argument in the next step to $\Lambda_{\eta}$ replaced by $\Lambda_{\eta} \nu$ as well.

  \textbf{Step 3.} \emph{(Separating vector).}
For showing that $\eta$ is $ p_{\eta}$-separating, let $T \in \pi(\Gamma)''$ be such that $T p_{\eta} = T$ and  $T \eta = 0$.
It will be shown that $T p_{\eta}  = 0$. Choose $c, c'  \in \ell^2 (\Gamma)$ as provided by Proposition \ref{prop:TAction}.
 Then the identity \eqref{eq:periodization} gives
  \begin{align*}
   0 &= T \eta =  \sum_{\gamma \in \Lambda_{\eta}} \sum_{\gamma' \in \Gamma_{[\eta]}}  c_{\gamma \gamma'} u(\gamma') \overline{\sigma(\gamma, \gamma')} \pi(\gamma) \eta =
 \sum_{\gamma \in \Lambda_{\eta}} d_{\gamma} \pi(\gamma) \eta.
  \end{align*}
 By assumption, $\pi(\Lambda_{\eta}) \eta$ is a Riesz sequence in $\Hpi$. Therefore, if $A > 0$ is a lower Riesz bound, then
 \[
  0 = \bigg\| \sum_{\gamma \in \Lambda_{\eta}} d_{\gamma} \pi (\gamma) \eta \bigg\|_{\Hpi}^2 \geq A \| d \|^2_{\ell^2 (\Lambda_{\eta})},
 \]
and thus it follows that, for all $\gamma \in \Lambda_{\eta}$,
  \begin{equation}\label{Eqn=CoefficientZero}
    d_{\gamma} =  \sum_{\gamma' \in \Gamma_{[\eta]} }   c_{\gamma \gamma'}  \overline{\sigma(\gamma, \gamma')}   u(\gamma') = 0.
  \end{equation}
  Possibly replacing $\Gamma_0$ by $\Gamma_0 \nu$ for some $\nu \in \Gamma_{[\eta]}$ (cf. Step 2) yields that $d_{\gamma} = 0$ for all $\gamma \in \Gamma$.

Using the sequence $c'$ of Proposition \ref{prop:TAction}, it also follows that
  \begin{equation}\label{Eqn=Coefficients2}
  \begin{split}
    T p_{\eta} \eta = &  \sum_{\gamma \in \Gamma}  c_\gamma'  \pi(\gamma) \eta  =
     \sum_{\gamma \in \Gamma} \frac{1}{|\Gamma_{[\eta]}|} \sum_{\gamma' \in \Gamma_{[\eta]}}  c_{\gamma  \gamma'} \overline{u((\gamma')^{-1})} \sigma(\gamma \gamma', (\gamma')^{-1})  \pi(\gamma) \eta.
  \end{split}
  \end{equation}
  Since $\sigma (\gamma', \nu) = u(\gamma') u(\nu) \overline{ u(\gamma \nu) }   $ for $\gamma, \nu \in \Gamma_{[\eta]}$, a direct calculation gives
  \[
  \sigma(\gamma', (\gamma')^{-1}) = u(\gamma') u((\gamma')^{-1}) \overline{u( e_G )} =  u(\gamma') u((\gamma')^{-1}).
  \]
  This, together with the cocycle identity \eqref{eq:cocycle_identity} yields that, for all $\gamma \in \Gamma, \gamma' \in \Gamma_{[\eta]}$,
\[
\sigma(\gamma \gamma', (\gamma')^{-1}) = \overline{\sigma(\gamma, \gamma')} \sigma(\gamma', (\gamma')^{-1}) \sigma(\gamma, e_G) =
 \overline{\sigma(\gamma, \gamma')} u(\gamma') u((\gamma')^{-1}).
\]
Hence, for the coefficients $c'$ in \eqref{Eqn=Coefficients2}, it follows by the previous equality and the identity \eqref{Eqn=CoefficientZero} that
\[
 \sum_{\gamma' \in \Gamma_{[\eta]}} c_{\gamma  \gamma'} \overline{u((\gamma')^{-1})} \sigma(\gamma \gamma', (\gamma')^{-1}) =  \sum_{\gamma' \in \Gamma_{[\eta]}} c_{\gamma  \gamma'} u(\gamma') \overline{\sigma(\gamma, \gamma')} = d_{\gamma} = 0
\]
for all $\gamma \in \Gamma$.  Thus, $T p_{\eta} = 0$.
  \end{proof}

Proposition \ref{prop:riesz_separating} provides an extension of \cite[Proposition 5.2]{romero2020density} to possibly non-trivial projective stabilizers.

\section{Discrete series representations of semi-simple Lie groups} \label{sec:discrete_semi}
Throughout, $G$ denotes a connected non-compact semi-simple Lie group with finite center and no compact factors,
i.e., no non-trivial connected normal compact subgroups.

\subsection{Center-valued trace and von Neumann dimension} \label{eq:cocycle_lattice}
In this section the center-valued trace and von Neumann dimension
will be determined for twisted group von Neumann algebras of lattices in semi-simple Lie groups.
Both results rely on two lemmata on cocycles and lattices that are specific
for the setting of semi-simple Lie groups.

The first lemma shows that any central element is automatically $\sigma$-regular.

\begin{lemma} \label{lem:central_sregular}
Let $\sigma$ be a cocycle on $G$. If $x \in Z(G)$, then $\sigma(x,y) = \sigma (y,x)$ for all $y \in G$.
\end{lemma}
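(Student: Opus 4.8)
The plan is to fix the central element $x \in Z(G)$ and study the $\mathbb{T}$-valued function
\[
\chi(y) := \sigma(x,y)\,\overline{\sigma(y,x)}, \qquad y \in G.
\]
Since $\chi(y) = 1$ is precisely the asserted identity $\sigma(x,y) = \sigma(y,x)$, it suffices to prove $\chi \equiv 1$. I would do this in two stages: first show that $\chi$ is a group homomorphism $G \to \mathbb{T}$, and then argue that such a homomorphism must be trivial because $G$ is perfect.

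First I would verify the homomorphism property $\chi(y_1 y_2) = \chi(y_1)\chi(y_2)$, which is a purely algebraic consequence of the cocycle identity \eqref{eq:cocycle_identity} together with the centrality relation $xy = yx$. Concretely, expand $\sigma(x, y_1 y_2)$ by applying the cocycle identity to the triple $(x, y_1, y_2)$, and expand $\sigma(y_1 y_2, x)$ by applying it to $(y_1, y_2, x)$. Centrality of $x$ lets one rewrite $\sigma(y_1, y_2 x) = \sigma(y_1, x y_2)$ and expand this once more via $(y_1, x, y_2)$, where one uses $y_1 x = x y_1$. Collecting terms and recalling that all values lie in $\mathbb{T}$, so $\overline{z} = z^{-1}$, the factor $\sigma(y_1, y_2)$ cancels against its conjugate and the factor $\sigma(xy_1, y_2)$ likewise cancels, leaving exactly $\sigma(x,y_1)\overline{\sigma(y_1,x)}\cdot\sigma(x,y_2)\overline{\sigma(y_2,x)} = \chi(y_1)\chi(y_2)$.

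With $\chi$ identified as a homomorphism into the abelian group $\mathbb{T}$, it is automatically trivial on every commutator $[a,b] = a b a^{-1} b^{-1}$. To conclude I would invoke that a connected semisimple Lie group with finite center is perfect, i.e. $G = [G,G]$ as an abstract group (by Goto's commutator theorem every element is in fact a single commutator); hence $\chi \equiv 1$ on all of $G$. Since $x$ is central its centralizer is all of $G$, so this says exactly that $x$ is $\sigma$-regular, as claimed.

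The main obstacle is the bookkeeping in the first stage: one must apply the cocycle identity three times and insert the relation $xy=yx$ at the right places so that the unwanted factors $\sigma(y_1,y_2)$ and $\sigma(xy_1,y_2)$ cancel, and a single sign or conjugation slip would destroy the homomorphism property. The passage to triviality is then immediate from perfectness, which I would cite rather than reprove. As an alternative to invoking abstract perfectness, one can note that $\sigma$ is Borel by standing assumption, so $\chi$ is a measurable homomorphism, hence continuous by automatic continuity, and continuous characters of $G$ are trivial since the closed commutator subgroup already exhausts $G$; this gives a second, purely topological route to $\chi \equiv 1$.
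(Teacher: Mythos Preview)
Your proof is correct and takes a genuinely different route from the paper. The paper appeals to the structure theory of Borel cocycles on semisimple Lie groups: it cites the result (Moore; Bagchi--Misra) that every such cocycle arises as $\sigma(x,y)=\chi(s(x)s(y)s(xy)^{-1})$ for a character $\chi$ of $\ker(q)$, where $q:\widetilde{G}\to G$ is the universal cover and $s$ a Borel section; the claim then drops out from $s(x)\in Z(\widetilde{G})$ when $x\in Z(G)$. Your argument, by contrast, stays entirely at the level of the cocycle identity: the three applications of \eqref{eq:cocycle_identity} you describe do indeed collapse $\chi(y)=\sigma(x,y)\overline{\sigma(y,x)}$ to a homomorphism $G\to\mathbb{T}$ (I checked the bookkeeping), and perfectness of a connected semisimple Lie group then forces $\chi\equiv 1$. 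This is more elementary and self-contained, avoiding the cited structure theorem entirely; the price is that you must invoke $G=[G,G]$, which is classical but perhaps less familiar than the covering-group machinery to readers of this paper. Your alternative measurability route also works here, since the paper's standing assumption is that $\sigma$ is jointly Borel, so its sections $y\mapsto\sigma(x,y)$ are Borel for every fixed $x$ and automatic continuity applies.
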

\begin{proof}
Let $\widetilde{G}$ be the universal covering group of $G$ with covering map $q : \widetilde{G} \to G$ . Let $s : G \to \widetilde{G}$ be a Borel cross-section with $s(e_G) = e_{\widetilde{G}}$, so that $q \circ s = \id_{G}$.
By \cite[Proposition 3.4]{moore1963extensions} or \cite[Corollary 3.1]{bagchi2003homogeneous}, there exists a unique character $\chi \in \widehat{\ker(q)}$ such that
\[
\sigma(x,y) = \chi (s(x) s(y) s(xy)^{-1}), \quad x,y \in G.
\]
Since $Z(\widetilde{G}) = q^{-1} (Z(G))$, see, e.g., \cite[Proposition 9.5.2]{hilgert2012structure}, it follows that if $x \in Z(G)$, then $s(x) \in Z(\widetilde{G})$. Hence, if $x \in Z(G)$ and $y \in G$, then
\[
\sigma(x,y) \overline{\sigma(y,x)} = \chi(s(x) s(y) s(xy)^{-1} s(yx) s(x)^{-1} s(y)^{-1} ) = \chi(e_{\widetilde{G}}) = 1,
\]
as asserted.
\end{proof}

A direct consequence of Lemma \ref{lem:central_sregular} is that if $(\pi, \Hpi)$ is an irreducible $\sigma$-representation of $G$ and $x \in Z(G)$, then
\[
\pi(x) \pi(y) = \sigma(x,y) \pi(yx) = \sigma(x,y) \overline{\sigma(y,x)} \pi(y) \pi (x) = \pi(y) \pi(x), \quad y \in G.
\]
Therefore, Schur's lemma yields that $\pi(x) \in \mathbb{C} \id_{\Hpi}$ for $x \in Z(G)$, so that the restriction $\pi|_Z$ forms a $1$-dimensional $\sigma$-representation $u : Z(G) \to \mathbb{T}$.
In particular, the restriction of $\sigma$ to $Z(G) \times Z(G)$ is a coboundary, i.e.,
\begin{align} \label{eq:central_character}
 \sigma(x,y) = u(x) u(y) \overline{u(xy)}, \quad x,y \in Z(G),
 \end{align}
for some measurable function $u : Z(G) \to \mathbb{T}$.
See also \cite[Lemma 7.2]{kleppner1965multipliers} for an alternative argument for possibly reducible representations.

The following lemma is \cite[Lemma 3.3.1]{goodman1989coxeter}; see also \cite[Theorem 2]{bekka2004square} for a more general version.

\begin{lemma}[\cite{goodman1989coxeter}] \label{lem:finite_central}
Let $\Gamma \leq G$ be a lattice. If $\gamma \in \Gamma$ has finite conjugacy class, then $\gamma \in Z(G)$.
\end{lemma}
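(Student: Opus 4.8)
The plan is to deduce the lemma from the \emph{Borel density theorem}, which applies to $G$ precisely because $G$ is connected semi-simple with finite center and no compact factors. First I would reinterpret the hypothesis on the conjugacy class in terms of the centralizer. Writing $Z_{\Gamma}(\gamma) := \{ y \in \Gamma : y \gamma = \gamma y \}$ for the centralizer of $\gamma$ in $\Gamma$, the orbit--stabilizer theorem for the conjugation action of $\Gamma$ on itself gives $|C_{\gamma}| = [\Gamma : Z_{\Gamma}(\gamma)]$. Hence the assumption $|C_{\gamma}| < \infty$ is equivalent to $Z_{\Gamma}(\gamma)$ having finite index in $\Gamma$. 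Since a finite-index subgroup of a lattice is again a lattice (it is discrete, and $G / Z_{\Gamma}(\gamma)$ is a finite-sheeted cover of $G/\Gamma$ and so has finite covolume), it follows that $\Lambda := Z_{\Gamma}(\gamma)$ is itself a lattice in $G$.

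Next I would pass to the adjoint representation $\Ad : G \to \mathrm{GL}(\mathfrak{g})$. Because $G$ is connected, $\ker(\Ad) = Z(G)$, and because $G$ is semi-simple the image $\Ad(G)$ is a connected \emph{centerless} semi-simple Lie group that is, moreover, a real linear algebraic group. By construction every element of $\Lambda$ commutes with $\gamma$, so every element of $\Ad(\Lambda)$ commutes with $\Ad(\gamma)$; that is, $\Ad(\Lambda)$ is contained in the centralizer $C := \{ h \in \Ad(G) : h\, \Ad(\gamma) = \Ad(\gamma)\, h \}$, which is a Zariski-closed subgroup of $\Ad(G)$ since centralizers of single elements in a linear algebraic group are Zariski closed.

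The decisive step is the Borel density theorem. Since $\Lambda$ is a lattice in $G$ and $G$ has no compact factors, the image $\Ad(\Lambda)$ is Zariski dense in $\Ad(G)$; here finiteness of $Z(G)$ is used to ensure that $\Ad(\Lambda) \cong \Lambda / (\Lambda \cap Z(G))$ is genuinely a lattice in $\Ad(G)$ (the covering $G \to \Ad(G)$ has finite kernel). Combining this with the preceding step, the Zariski-closed subgroup $C$ contains the Zariski-dense subgroup $\Ad(\Lambda)$, whence $C = \Ad(G)$. Thus $\Ad(\gamma)$ is central in $\Ad(G)$, which is centerless, forcing $\Ad(\gamma) = \id_{\mathfrak{g}}$. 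Since $\ker(\Ad) = Z(G)$, this gives $\gamma \in Z(G)$, as claimed.

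I expect the main obstacle to lie not in any one computation but in correctly assembling the algebraic-group input around the Borel density theorem: that $\Ad(G)$ is a real algebraic group with trivial center, that the elementwise centralizer $C$ is Zariski closed, and that $\Ad(\Lambda)$ remains a lattice in $\Ad(G)$ — each of which leans on the standing hypotheses that $G$ is semi-simple with finite center and no compact factors. The remaining ingredients (orbit--stabilizer, finite-index subgroups of lattices being lattices, and $\ker \Ad = Z(G)$) are routine and require no further care.
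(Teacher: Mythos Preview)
Your argument via the Borel density theorem is correct and is in fact the standard proof of this statement; it is essentially the argument given in the cited reference \cite[Lemma 3.3.1]{goodman1989coxeter}. The paper itself does not supply a proof of this lemma but merely quotes it from \cite{goodman1989coxeter} (and points to \cite[Theorem 2]{bekka2004square} for a generalization), so there is nothing to compare beyond noting that your write-up reproduces the classical Borel-density argument that underlies the cited result.
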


In particular, Lemma \ref{lem:finite_central} yields that $Z(\Gamma) = Z(G) \cap \Gamma$.

\begin{lemma} \label{lem:Ctr}
Let $\Gamma \leq G$ be a lattice and let $\sigma$ be a cocycle on $\Gamma$.
Then
\begin{equation}\label{Eqn=Center}
\mathcal{Z} (\VN) = \VN \cap \VN'   = \mathrm{VN}_{\sigma}(  Z (\Gamma )  ).
\end{equation}
Therefore, the center-valued trace is determined by
\begin{equation}
\ctr \bigg(  \sum_{\gamma \in \Gamma}  c_\gamma \lambda_{\Gamma}^{\sigma}(\gamma ) \bigg)= \sum_{\gamma \in Z (\Gamma)}  c_\gamma \lambda_{\Gamma}^{\sigma}(\gamma ),
\end{equation}
for any scalars $c_\gamma \in \mathbb{C}$ that are non-zero for finitely many $\gamma \in \Gamma$.
\end{lemma}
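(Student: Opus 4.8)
The plan is to prove the two assertions in turn: first establish the description \eqref{Eqn=Center} of the center $\cZ(\VN)$, and then identify $\ctr$ with the canonical trace-preserving normal conditional expectation of $\VN$ onto its center, whose action on the generators $\lambda_{\Gamma}^{\sigma}(\gamma)$ can be read off directly. Throughout I would work with the standard convolution picture: every $a \in \VN \subseteq \mathcal{B}(\ell^2(\Gamma))$ is determined by its coefficient sequence $c_{\gamma} := \langle a \delta_e, \delta_\gamma \rangle \in \ell^2(\Gamma)$, and $a$ acts as $\sigma$-twisted convolution by $c$ (cf. Section \ref{Sect=GNS}, where $L^2(\VN) \cong \ell^2(\Gamma)$). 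The two structural inputs are Lemma \ref{lem:finite_central}, giving that elements with finite conjugacy class lie in $Z(G) \cap \Gamma = Z(\Gamma)$, a \emph{finite} set since $Z(G)$ is finite, and Lemma \ref{lem:central_sregular}, giving that every element of $Z(\Gamma) \subseteq Z(G)$ is $\sigma$-regular.

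For \eqref{Eqn=Center}, the inclusion $\mathrm{VN}_{\sigma}(Z(\Gamma)) \subseteq \cZ(\VN)$ is the easy direction. For $z \in Z(\Gamma)$ and $y \in \Gamma$ one has $zy = yz$ and $\sigma(z,y) = \sigma(y,z)$ by Lemma \ref{lem:central_sregular}, so $\lambda_{\Gamma}^{\sigma}(z) \lambda_{\Gamma}^{\sigma}(y) = \sigma(z,y) \lambda_{\Gamma}^{\sigma}(zy) = \sigma(y,z) \lambda_{\Gamma}^{\sigma}(yz) = \lambda_{\Gamma}^{\sigma}(y) \lambda_{\Gamma}^{\sigma}(z)$; as $\lambda_{\Gamma}^{\sigma}(z)$ commutes with all generators it is central, and passing to the generated von Neumann algebra yields the inclusion. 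For the reverse inclusion, take $a \in \cZ(\VN)$ with coefficients $c \in \ell^2(\Gamma)$. Conjugation by a generator $\lambda_{\Gamma}^{\sigma}(y)$ sends $\lambda_{\Gamma}^{\sigma}(\gamma)$ to a unimodular multiple of $\lambda_{\Gamma}^{\sigma}(y \gamma y^{-1})$, so centrality forces $|c_{y\gamma y^{-1}}| = |c_{\gamma}|$ for all $y,\gamma$, i.e. $|c|$ is constant on each conjugacy class. Since $c \in \ell^2(\Gamma)$, the coefficient $c_{\gamma}$ must vanish whenever the conjugacy class of $\gamma$ is infinite; hence $c$ is supported on the set of elements with finite conjugacy class, which is contained in $Z(\Gamma)$ by Lemma \ref{lem:finite_central}. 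Thus $a = \sum_{\gamma \in Z(\Gamma)} c_{\gamma} \lambda_{\Gamma}^{\sigma}(\gamma) \in \mathrm{VN}_{\sigma}(Z(\Gamma))$, proving \eqref{Eqn=Center}.

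For the trace formula I would use that, for a finite von Neumann algebra with faithful normal trace $\tau$, the center-valued trace $\ctr$ coincides with the unique $\tau$-preserving normal conditional expectation $E$ of $\VN$ onto $\cZ(\VN) = \mathrm{VN}_{\sigma}(Z(\Gamma))$ (this expectation exists and is standard, cf. the references \cite{takesaki2002theory, dixmier1981von}; that it meets the defining properties of $\ctr$ from Section \ref{sec:vN} is immediate). Since $Z(\Gamma)$ is finite, $\{\lambda_{\Gamma}^{\sigma}(z) : z \in Z(\Gamma)\}$ is an orthonormal basis of $\cZ(\VN)$ for $\langle \cdot,\cdot\rangle_{\tau}$, and $E$ is determined on generators by $\langle E(\lambda_{\Gamma}^{\sigma}(\gamma)), \lambda_{\Gamma}^{\sigma}(z)\rangle_{\tau} = \langle \lambda_{\Gamma}^{\sigma}(\gamma), \lambda_{\Gamma}^{\sigma}(z)\rangle_{\tau}$ for all $z \in Z(\Gamma)$. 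Here $\langle \lambda_{\Gamma}^{\sigma}(\gamma), \lambda_{\Gamma}^{\sigma}(z)\rangle_{\tau} = \tau\big(\lambda_{\Gamma}^{\sigma}(z)^{*}\lambda_{\Gamma}^{\sigma}(\gamma)\big)$ is a unimodular multiple of $\tau\big(\lambda_{\Gamma}^{\sigma}(z^{-1}\gamma)\big) = \langle \lambda_{\Gamma}^{\sigma}(z^{-1}\gamma)\delta_e, \delta_e\rangle$, which vanishes unless $\gamma = z$ and equals $1$ when $\gamma = z$. It follows that $E(\lambda_{\Gamma}^{\sigma}(\gamma)) = \lambda_{\Gamma}^{\sigma}(\gamma)$ for $\gamma \in Z(\Gamma)$ and $E(\lambda_{\Gamma}^{\sigma}(\gamma)) = 0$ otherwise; extending by linearity and normality to $a = \sum_{\gamma} c_{\gamma}\lambda_{\Gamma}^{\sigma}(\gamma)$ gives $\ctr(a) = \sum_{\gamma \in Z(\Gamma)} c_{\gamma} \lambda_{\Gamma}^{\sigma}(\gamma)$, as claimed. (Alternatively, one can verify the explicit coefficient-compression map satisfies all the axioms of $\ctr$ directly and invoke uniqueness; the trace property reduces to the identity $\sigma(\gamma,\gamma') = \sigma(\gamma',\gamma)$ whenever $\gamma\gamma' \in Z(\Gamma)$, which follows from $\sigma(\gamma,\gamma^{-1}) = \sigma(\gamma^{-1},\gamma)$ together with the $\sigma$-regularity of $\gamma\gamma'$.)

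The main obstacle is the reverse inclusion in \eqref{Eqn=Center}: this is where one must combine the convolution description with the $\ell^2$-summability of the coefficients to eliminate all infinite conjugacy classes, and then invoke Lemma \ref{lem:finite_central} — the genuinely semi-simple input — to place the surviving support inside the finite group $Z(\Gamma)$. A secondary point, essential for obtaining the full $Z(\Gamma)$ (rather than only its $\sigma$-regular elements) in the center, is the $\sigma$-regularity of central elements supplied by Lemma \ref{lem:central_sregular}; this reflects that $\sigma$ arises as the restriction of a cocycle for which central elements are automatically $\sigma$-regular, without which the center could be strictly smaller.
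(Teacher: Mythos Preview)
Your proof is correct and rests on the same two structural inputs as the paper's: Lemma~\ref{lem:finite_central} (finite conjugacy class $\Rightarrow$ central) and Lemma~\ref{lem:central_sregular} ($\sigma$-regularity of central elements). The difference is one of packaging. The paper obtains \eqref{Eqn=Center} by invoking the general description of $\cZ(\VN)$ from \cite{kleppner1962structure,omland2014primeness} (center spanned by $\lambda_{\Gamma}^{\sigma}(\gamma)$ over the $\sigma$-regular finite conjugacy classes) and then observing via the two lemmas that these classes are precisely the singletons in $Z(\Gamma)$. You instead reprove that black-boxed result directly in this setting: the easy inclusion from $\sigma$-regularity, the reverse inclusion from the $\ell^2$-coefficient argument forcing support on finite conjugacy classes. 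Your route is more self-contained and transparent; the paper's is shorter. You also spell out the trace formula via the $\tau$-preserving conditional expectation onto $\cZ(\VN)$, which the paper leaves implicit after establishing \eqref{Eqn=Center}. Your closing remark that Lemma~\ref{lem:central_sregular} is really a statement about cocycles on $G$ (so that the hypothesis ``$\sigma$ a cocycle on $\Gamma$'' should be read as the restriction of one on $G$) is a fair observation about the statement; the paper's own proof makes the same tacit assumption.
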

\begin{proof}
Firstly, by Lemma \ref{lem:finite_central}, if the conjugacy class of an element $\gamma \in \Gamma$ is finite, then $\gamma \in Z(\Gamma) = Z(G) \cap \Gamma$ and its conjugacy class in $\Gamma$ contains $\gamma$ only.
In particular, this implies that $\gamma \in Z(\Gamma)$ is $\sigma$-regular by Lemma \ref{lem:central_sregular}.
Therefore, by \cite[Theorem 1 and Lemma 2]{kleppner1962structure} or \cite[Lemma 2.2 and 2.4]{omland2014primeness}, the center $\mathcal{Z} (\VN)$ is spanned linearly by all $\lambda_{\Gamma}^{\sigma} (\gamma)$ with $\gamma \in Z(\Gamma)$. This proves \eqref{Eqn=Center}.
\end{proof}

\begin{proposition} \label{prop:cdim_semisimple}
Let $(\pi, \Hpi)$ be a discrete series $\sigma$-representation of $G$.
Let $\Gamma \leq G$ be a lattice and let $u : Z(\Gamma) \to \mathbb{T}$ be any function such that $\pi(\gamma) = u(\gamma) \id_{\Hpi}$ for $\gamma \in Z(\Gamma)$.
Then the operator $\cdim_{\vN}(\Hpi)$ on $\ell^2 (\Gamma)$ is given by twisted convolution
with kernel
\[
\phi (\gamma) =
\begin{cases}
 d_{\pi} \vol(G/\Gamma) \overline{u (\gamma)}, & \text{if} \;\; \gamma \in Z(\Gamma), \\
 0, & \text{otherwise.}
\end{cases}
\]
\end{proposition}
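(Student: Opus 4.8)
The plan is to specialize the general formula for $\cdim_{\VN}(\Hpi)$ furnished by Theorem \ref{thm:cdim_unimodular} using the two structural lemmata on central elements of lattices in semi-simple Lie groups. Recall that Theorem \ref{thm:cdim_unimodular} presents $\cdim_{\VN}(\Hpi)$ as the $\sigma$-twisted convolution operator on $\ell^2(\Gamma)$ whose kernel $\phi$ is supported on the finite $\sigma$-regular conjugacy classes and is given there by an explicit integral. The first step is therefore to pin down this support, and the remaining steps reduce to evaluating the integral on it.

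First I would identify the support of $\phi$. By Lemma \ref{lem:finite_central}, any $\gamma \in \Gamma$ whose conjugacy class $C_{\gamma}$ in $\Gamma$ is finite must lie in $Z(G)$, hence in $Z(G) \cap \Gamma = Z(\Gamma)$. Conversely, for $\gamma \in Z(\Gamma)$ the class is the singleton $C_{\gamma} = \{\gamma\}$, so $|C_{\gamma}| = 1$, and such $\gamma$ is $\sigma$-regular by Lemma \ref{lem:central_sregular}. Thus $\phi(\gamma) = 0$ whenever $\gamma \notin Z(\Gamma)$, and it remains only to evaluate the defining integral for $\gamma \in Z(\Gamma)$.

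Next I would simplify the integrand for such central $\gamma$. Since $\gamma$ commutes with every $y \in G$, its centralizer in $\Gamma$ is $Z(\gamma) = \Gamma$, so the integration is over $G/\Gamma$ and contributes the factor $\vol(G/\Gamma)$. Moreover $y^{-1}\gamma y = \gamma$ for all $y$, so using $\pi(\gamma) = u(\gamma)\id_{\Hpi}$ and the unit normalization of $\eta$ one gets
\[
\langle \eta, \pi(y^{-1}\gamma y)\eta \rangle = \langle \eta, u(\gamma)\eta \rangle = \overline{u(\gamma)}.
\]
For the cocycle factor, Lemma \ref{lem:central_sregular} yields $\sigma(\gamma, y) = \sigma(y, \gamma) = \sigma(y, y^{-1}\gamma y)$, whence $\overline{\sigma(\gamma, y)}\,\sigma(y, y^{-1}\gamma y) = |\sigma(\gamma, y)|^2 = 1$, as $\sigma$ is $\mathbb{T}$-valued. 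The integrand therefore collapses to the constant $\overline{u(\gamma)}$.

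Finally, combining $|C_{\gamma}| = 1$, the identification $G/Z(\gamma) = G/\Gamma$ with invariant volume $\vol(G/\Gamma)$, and the constant integrand gives
\[
\phi(\gamma) = \frac{d_{\pi}}{1}\int_{G/\Gamma} \overline{u(\gamma)}\, d(y\Gamma) = d_{\pi}\,\vol(G/\Gamma)\,\overline{u(\gamma)}, \qquad \gamma \in Z(\Gamma),
\]
as claimed. I do not anticipate a genuine obstacle, since the argument is a direct specialization of Theorem \ref{thm:cdim_unimodular}; the two points requiring care are matching the normalization of the invariant measure on $G/Z(\gamma) = G/\Gamma$ with the covolume $\vol(G/\Gamma)$, and tracking the conjugation convention in the inner product so that $\langle \eta, \pi(\gamma)\eta\rangle$ produces $\overline{u(\gamma)}$ rather than $u(\gamma)$.
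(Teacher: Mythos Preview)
Your proposal is correct and follows essentially the same approach as the paper: specialize Theorem \ref{thm:cdim_unimodular} by using Lemma \ref{lem:finite_central} to identify the finite conjugacy classes with $Z(\Gamma)$, then evaluate the integral using Lemma \ref{lem:central_sregular} to kill the cocycle factor and $\pi(\gamma)=u(\gamma)\id_{\Hpi}$ to reduce the matrix coefficient to $\overline{u(\gamma)}$. If anything, you are slightly more careful than the paper in explicitly verifying $\sigma$-regularity of central elements before invoking the formula of Theorem \ref{thm:cdim_unimodular}.
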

\begin{proof}
If $\gamma \in \Gamma$ is such that $C_{\gamma}$ is finite, then $\gamma \in Z (\Gamma) = \Gamma \cap Z(G)$ by Lemma \ref{lem:central_sregular}. In particular, $|C_{\gamma} | = 1$ and $Z(\gamma) = \Gamma$, so that
the function $\phi$ of Theorem \ref{thm:cdim_unimodular} can be computed as
\begin{align*}
\phi(\gamma) &= d_{\pi} \int_{G/\Gamma} \overline{\sigma(\gamma, y)} \sigma(y, \gamma) \langle \eta, \pi (\gamma) \eta \rangle \; d\mu_{G/\Gamma}(y\Gamma)
= d_{\pi} \overline{  u(\gamma)} \int_{G/\Gamma} \overline{\sigma(\gamma, y)} \sigma(y,\gamma) d\mu_{G/\Gamma} (y\Gamma) \\
&= d_{\pi} \vol(G / \Gamma) \overline{ u(\gamma)},
\end{align*}
where the last equality follows since $\overline{\sigma(\gamma, y)} \sigma(y,\gamma) = 1$ for all $y \in G$ by Lemma \ref{lem:central_sregular}.
\end{proof}

 Proposition \ref{prop:cdim_semisimple} provides a generalization of \cite[Theorem 2(ii)]{bekka2004square} to projective representations
 in the case of a semi-simple real Lie group.

\subsection{Density conditions} The following two results are the main results of this paper.

\begin{theorem} \label{thm:density_main1}
Let $(\pi, \Hpi)$ be a discrete series $\sigma$-representation of $G$ and let $\eta \in \Hpi \setminus \{0\}$.  Let $\Gamma \leq G$ be a lattice.
Suppose that $\Lambda \subset \Gamma$ is a finite subgroup containing $Z(\Gamma)$ and such that there exists $u : \Lambda \to \mathbb{T}$ satisfying
$\pi(\gamma) \eta = u(\gamma) \eta$ for all $\gamma \in \Lambda$. Let $p_{\Lambda} := |\Lambda|^{-1} \sum_{\gamma \in \Lambda} \overline{u(\gamma)} \pi(\gamma)$.

Then the following assertions hold:
\begin{enumerate}
\item[(i)] If $\eta$ is a cyclic vector for $\pi|_{\Gamma}$, then $\vol(G/\Gamma) d_{\pi} \leq |\Lambda|^{-1}$.
\item[(ii)] If $\eta$ is a $p_{\Lambda}$-separating vector for $\pi(\Gamma)''$, then $\vol(G/\Gamma) \geq |\Lambda|^{-1}$.
\end{enumerate}
\end{theorem}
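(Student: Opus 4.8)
The plan is to compare the center-valued von Neumann dimension $\cdim_{\VN}(\Hpi)$ with the center-valued trace $\ctr(p_\Lambda)$, as in Proposition \ref{Prop=DimEstimate} and Proposition \ref{prop:dimension_pseparating}, and then to extract a scalar inequality from the resulting relation between central operators. First I would record the preliminaries. Exactly as in the opening of Proposition \ref{prop:riesz_separating}, the hypothesis $\pi(\gamma)\eta = u(\gamma)\eta$ on $\Lambda$ forces $\sigma(\gamma,\gamma') = u(\gamma)u(\gamma')\overline{u(\gamma\gamma')}$ on $\Lambda\times\Lambda$, so Lemma \ref{lem:projection} shows $p_\Lambda$ is a projection in $\pi(\Gamma)''$, and $p_\Lambda\eta = |\Lambda|^{-1}\sum_{\gamma\in\Lambda}|u(\gamma)|^2\eta = \eta$. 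Viewing $\Hpi$ as a module over $\VN$ as in Section \ref{sec:hilbert_lattices}, with $\lambda_{\Gamma}^{\sigma}(\gamma)$ acting as $\pi(\gamma)$, the projection $p_\Lambda$ is implemented by $\tilde p_\Lambda := |\Lambda|^{-1}\sum_{\gamma\in\Lambda}\overline{u(\gamma)}\lambda_{\Gamma}^{\sigma}(\gamma)\in\VN$, which fixes $\eta$.

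The decisive step is a proportionality between the two central quantities. Since $Z(\Gamma)\subseteq\Lambda$ and $u|_{Z(\Gamma)}$ coincides with the central character of \eqref{eq:central_character} (because $\pi(\gamma) = u(\gamma)\id_{\Hpi}$ for $\gamma\in Z(\Gamma)$), Lemma \ref{lem:Ctr} gives
\[
\ctr(\tilde p_\Lambda) = \frac{1}{|\Lambda|}\sum_{\gamma\in Z(\Gamma)}\overline{u(\gamma)}\lambda_{\Gamma}^{\sigma}(\gamma) = \frac{|Z(\Gamma)|}{|\Lambda|}\,q, \qquad q := \frac{1}{|Z(\Gamma)|}\sum_{\gamma\in Z(\Gamma)}\overline{u(\gamma)}\lambda_{\Gamma}^{\sigma}(\gamma),
\]
while Proposition \ref{prop:cdim_semisimple} yields $\cdim_{\VN}(\Hpi) = d_{\pi}\vol(G/\Gamma)\sum_{\gamma\in Z(\Gamma)}\overline{u(\gamma)}\lambda_{\Gamma}^{\sigma}(\gamma)$. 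Here $q$ is a nonzero central projection (Lemma \ref{lem:projection} applied to $\lambda_{\Gamma}^{\sigma}$), and the two expressions combine into the identity $\cdim_{\VN}(\Hpi) = d_{\pi}\vol(G/\Gamma)|\Lambda|\,\ctr(\tilde p_\Lambda)$, with $\ctr(\tilde p_\Lambda)\ge 0$ nonzero by faithfulness of $\ctr$.

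For assertion (i), cyclicity of $\eta$ gives $\overline{\VN\eta} = \Hpi$, so Proposition \ref{Prop=DimEstimate} with $p = \tilde p_\Lambda$ yields $\cdim_{\VN}(\Hpi)\le\ctr(\tilde p_\Lambda)$; combined with the proportionality this reads $(d_{\pi}\vol(G/\Gamma)|\Lambda| - 1)\ctr(\tilde p_\Lambda)\le 0$, and since $\ctr(\tilde p_\Lambda)$ is a nonzero positive element the scalar factor satisfies $d_{\pi}\vol(G/\Gamma)|\Lambda|\le 1$, i.e. $\vol(G/\Gamma)d_{\pi}\le|\Lambda|^{-1}$. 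For assertion (ii) I would apply Proposition \ref{prop:dimension_pseparating} over $M = \pi(\Gamma)''$, on which $\Hpi$ is tautologically faithful and for which the hypothesis is precisely the existence of a $p_\Lambda$-separating vector; the resulting inequality $\cdim_{M}(\Hpi)\ge\ctr(p_\Lambda)$ transfers back to $\cdim_{\VN}(\Hpi)\ge\ctr(\tilde p_\Lambda)$ (all terms live in $q\cZ(\VN)$), and the same proportionality forces $\vol(G/\Gamma)d_{\pi}\ge|\Lambda|^{-1}$.

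The crux is the passage from the operator inequality to the numerical one when $\VN$ is not a factor, so that $\cdim_{\VN}(\Hpi)$ and $\ctr(\tilde p_\Lambda)$ are genuinely center-valued. The assumption $Z(\Gamma)\subseteq\Lambda$ is exactly what makes this work: it forces $\ctr(\tilde p_\Lambda)$ to be supported on the same central projection $q$ as $\cdim_{\VN}(\Hpi)$, so the two are scalar multiples of one another and the comparison reduces to a single positive scalar. A secondary point, which I expect to be the main technical obstacle, is that in (ii) the separating hypothesis must be used relative to $\pi(\Gamma)''$ rather than $\VN$: over $\VN$ the module is supported on $q$ and $\eta$ need not be $\tilde p_\Lambda$-separating, so one has to work on the reduction $\pi(\Gamma)''\cong q\VN$ and transport the center-valued identities back through the central support.
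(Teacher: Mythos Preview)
Your proposal is correct and follows essentially the same approach as the paper: both compute $\ctr(\tilde p_\Lambda)$ and $\cdim_{\VN}(\Hpi)$ via Lemma~\ref{lem:Ctr} and Proposition~\ref{prop:cdim_semisimple}, recognize them as scalar multiples of the same central element, and invoke Propositions~\ref{Prop=DimEstimate} and~\ref{prop:dimension_pseparating} for (i) and (ii) respectively. For (ii) the paper bypasses your transfer step by working directly over $\VN$---observing that $\eta$ is $(q'\tilde p_\Lambda)$-separating where $q'$ is the central support of $\pi:\VN\to\pi(\Gamma)''$, so that Proposition~\ref{prop:dimension_pseparating} gives $\cdim_{\VN}(\Hpi)\ge q'\ctr(\tilde p_\Lambda)$; your route through $M=\pi(\Gamma)''$ is equivalent once one checks (as you anticipate but do not quite prove) that $q'=q$, which holds because your $q$ is a minimal projection in $\cZ(\VN)=\mathrm{VN}_\sigma(Z(\Gamma))$ and $0\ne q'\le q$.
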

\begin{proof}
As in the proof of Proposition \ref{prop:riesz_separating}, it can be shown that $\sigma|_{\Lambda \times \Lambda}$ is a coboundary of the form $\sigma(\gamma, \gamma') = u(\gamma) u(\gamma') \overline{u(\gamma \gamma')}$ for $\gamma, \gamma' \in \Lambda$.
An application of Lemma \ref{lem:projection} yields therefore that
\[
p :=
p_{\Lambda} = \frac{1}{ \vert \Lambda \vert } \sum_{\gamma \in \Lambda} \overline{ u(\gamma ) }  \pi (\gamma )
\]
is a projection in $\pi (\Gamma)''$.

Note that by  Section \ref{sec:hilbert_lattices} or Lemma \ref{lem:intertwiner}, the map  $\pi$ determines a normal $\ast$-homomorphism $\lambda_\Gamma^{\sigma}(\gamma) \mapsto \pi(\gamma)$ from $\vN$ into $\pi (\Gamma)''$, which will still be denoted by $\pi$. The kernel of $\pi$ is a $\sigma$-weakly closed two-sided ideal in $\VN$, and hence there exists a central projection $q \in \vN$ such that $\ker(\pi) = (1- q ) \VN$.  Thus, $\vN q \simeq \pi (\Gamma)''$ through $\pi$.
This way $\Hpi$  is a faithful $\pi (\Gamma)''$-module as well as a $\VN$-module through $\pi$.
Set the projection in $\vN$ corresponding to $p$  by $p_0  = \frac{1}{ \vert \Lambda \vert } \sum_{\gamma \in \Lambda} \overline{ u(\gamma ) }  \lambda_{\Gamma} ^{\sigma} (\gamma )$, so that $\pi(p_0) = p$.
Since $Z(\Gamma) \subset \Lambda$ by assumption, an application of Lemma \ref{lem:Ctr} gives
\begin{equation}\label{Eqn=CtrComputation}
\ctr(p_0) = \frac{1}{ \vert \Lambda \vert }  \sum_{\gamma \in Z (\Gamma) } \overline{ u(\gamma ) }   \lambda_{\Gamma}^{\sigma} (\gamma) .
\end{equation}
By Proposition \ref{prop:cdim_semisimple},
\begin{equation}\label{Eqn=DimensionComputation}
\cdim_{  \vN   }(  \Hpi  )  =  \vol(G/ \Gamma ) d_{\pi} \sum_{\gamma \in Z (\Gamma) } \overline{  u (\gamma)  } \lambda_{\Gamma}^{\sigma}(\gamma).
\end{equation}
Assertions (i) and (ii) will be shown by combining  \eqref{Eqn=CtrComputation} and  \eqref{Eqn=DimensionComputation}.

(i) Suppose that $\vol(G/\Gamma) d_{\pi} > |\Lambda|^{-1}$. Note that $\pi(p_0) \eta = p  \eta = \eta$, and thus Proposition \ref{Prop=DimEstimate} implies
\begin{equation}\label{Eqn=CtrEstimate}
\cdim_{  \vN   } \big(   \overline{\vN   \eta}    \big) \leq \ctr(p_0).
\end{equation}
This, combined with the identities \eqref{Eqn=CtrComputation} and \eqref{Eqn=DimensionComputation}, yields that
\[
\begin{split}
 \cdim_{  \vN  }\big(   \overline{\vN    \eta}  \big) &\leq
 \frac{1}{ \vert \Lambda \vert} \sum_{\gamma \in Z(\Gamma)} \overline{ u(\gamma) }  \lambda_{\Gamma}^{\sigma} (\gamma) \\
&<     \vol(G / \Gamma ) d_{\pi} \sum_{\gamma \in Z (\Gamma)}  \overline{ u(\gamma) } \lambda_{\Gamma}^{\sigma} (\gamma) \\
 &=    \cdim_{  \vN   }(   \Hpi   ).
\end{split}
\]
In particular, this implies that $\Span \pi (\Gamma)  \eta \neq \Hpi$. By contraposition, this shows (i).

(ii) Suppose that $\eta$ is $p$-separating for $\pi (\Gamma)''$. Then it is $p_0 q$-separating if $\Hpi$ is viewed as a $\vN$-module through $\pi : \vN \to \pi (\Gamma)''$.
 Then Proposition \ref{prop:dimension_pseparating} and the fact that $q$ is central implies that
\[
 \cdim_{\vN} (\Hpi) \geq \ctr(q p_0  ) = q \ctr(p_0 ) .
\]
Combining this with the identities \eqref{Eqn=CtrComputation} and \eqref{Eqn=DimensionComputation} entails
\[
\vol(G/\Gamma) d_{\pi} \sum_{\gamma \in Z (\Gamma) } \overline{  u(\gamma)  } \lambda_{\Gamma}^{\sigma} (\gamma)
\geq q |\Lambda|^{-1} \sum_{\gamma \in Z (\Gamma) } \overline{  u(\gamma)  } \lambda_{\Gamma}^{\sigma} (\gamma),
\]
which is only possible if $\vol(G/\Gamma) d_{\pi} \geq |\Lambda|^{-1}$.
\end{proof}

\begin{theorem} \label{thm:main1_frame}
Under the same assumptions and notation as Theorem  \ref{thm:density_main1}, let $X$ be a set of representatives of $\Gamma / \Lambda$.
\begin{enumerate}
 \item[(i)] If $(\pi (\gamma) \eta)_{\gamma \in \Gamma}$ is a frame for $\Hpi$, then
 $
  \vol(G / \Gamma) d_{\pi_{\alpha}} \leq |\Lambda|^{-1}.
 $
\item[(ii)] If  $(\pi (\gamma) \eta)_{\gamma \in X}$ is a Riesz sequence in $\Hpi$, then $
  \vol(G / \Gamma) d_{\pi_{\alpha}} \geq |\Lambda|^{-1}.
 $
\end{enumerate}
\end{theorem}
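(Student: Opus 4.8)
The plan is to deduce both assertions directly from Theorem \ref{thm:density_main1}, exploiting that the frame (resp.\ Riesz) hypothesis is stronger than cyclicity (resp.\ $p_\Lambda$-separateness). For part (i), I would simply use that a frame is complete: if $(\pi(\gamma)\eta)_{\gamma\in\Gamma}$ satisfies the lower bound in \eqref{eq:frame_ineq}, then $\Span \pi(\Gamma)\eta = \Hpi$, so $\eta$ is a cyclic vector for $\pi|_\Gamma$. Theorem \ref{thm:density_main1}(i) then yields $\vol(G/\Gamma)d_\pi \le |\Lambda|^{-1}$ with no further work.

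For part (ii), the aim is to show that the Riesz hypothesis forces $\eta$ to be $p_\Lambda$-separating, after which Theorem \ref{thm:density_main1}(ii) applies. The first and main step is to identify $\Lambda$ with the full projective stabilizer $\Gamma_{[\eta]} = \{\gamma\in\Gamma : \pi(\gamma)\eta\in\mathbb{C}\eta\}$. By hypothesis $\pi(\gamma)\eta = u(\gamma)\eta$ for $\gamma\in\Lambda$, so $\Lambda\subseteq\Gamma_{[\eta]}$ and the map $\Gamma/\Lambda\to\Gamma/\Gamma_{[\eta]}$ is a surjection whose fibres each have cardinality $[\Gamma_{[\eta]}:\Lambda]$. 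I claim the Riesz hypothesis makes this map injective on $X$: if two distinct $\gamma_1,\gamma_2\in X$ lay in one $\Gamma_{[\eta]}$-coset, say $\gamma_1 = \gamma_2\nu$ with $\nu\in\Gamma_{[\eta]}$, then $\pi(\nu)\eta\in\mathbb{C}\eta$ would give $\pi(\gamma_1)\eta = \overline{\sigma(\gamma_2,\nu)}\,\pi(\gamma_2)\pi(\nu)\eta\in\mathbb{C}\,\pi(\gamma_2)\eta$, contradicting the $\mathbb{C}$-linear independence entailed by the Riesz property. Since $X$ exhausts $\Gamma/\Lambda$, injectivity forces $[\Gamma_{[\eta]}:\Lambda]=1$, i.e.\ $\Lambda = \Gamma_{[\eta]}$; in particular $\Gamma_{[\eta]}$ is finite, $p_\Lambda = p_\eta$, and $X$ is a set of representatives of $\Gamma/\Gamma_{[\eta]}$.

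With this identification in hand, Proposition \ref{prop:riesz_separating} applies verbatim (with $\Gamma_{[\eta]} = \Lambda$, $\Lambda_\eta = X$, and $p_\eta = p_\Lambda$) and shows that $\eta$ is a $p_\Lambda$-separating vector for $\pi(\Gamma)''$. Theorem \ref{thm:density_main1}(ii) then gives $\vol(G/\Gamma)d_\pi \ge |\Lambda|^{-1}$, completing the proof. The only genuinely non-trivial ingredient is the Riesz-to-separating implication of Proposition \ref{prop:riesz_separating}, which has already been established; the remaining obstacle is the bookkeeping that forces $\Lambda = \Gamma_{[\eta]}$, namely verifying that the Riesz hypothesis over a transversal of $\Gamma/\Lambda$ cannot hold when $\Lambda$ is a proper subgroup of the projective stabilizer. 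Everything else is a direct appeal to the already-proven results.
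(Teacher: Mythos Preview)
Your proposal is correct and follows essentially the same route as the paper's proof. The paper argues (i) by noting that a frame is complete and invoking Theorem \ref{thm:density_main1}(i), and for (ii) it asserts in one line that the Riesz property forces $\Lambda = \Gamma_{[\eta]}$ ``by linear independence'', then applies Proposition \ref{prop:riesz_separating} and Theorem \ref{thm:density_main1}(ii); you spell out the coset-fibre bookkeeping behind that one-line assertion, but the logic is identical.
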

\begin{proof}
 (i) If $(\pi (\gamma) \eta )_{\gamma \in \Gamma}$ is a frame for $\Hpi$, then it is clearly complete in $\Hpi$. Hence, the conclusion follows from Theorem \ref{thm:density_main1}.

 (ii) If $(\pi (\gamma) \eta)_{\gamma \in X}$ is a Riesz sequence in $\Hpi$, then necessarily
 $ \Lambda = \Gamma_{[\eta]} $ by linear independence. Hence, Proposition \ref{prop:riesz_separating} implies that
  $\eta$ is $p_{\eta}$-separating for $\pi (\Gamma)''$.
   The conclusion follows therefore from Theorem \ref{thm:density_main1}.
\end{proof}

\section{Holomorphic discrete series and Bergman kernels} \label{sec:holomorphic_discrete}
This section concerns the so-called (scalar-valued) holomorphic discrete series.
Henceforth, it is additionally assumed that $G$ is such that the symmetric space $ G/K$,
where $K \leq G$ is a maximal compact subgroup, is Hermitian, i.e., has an invariant complex structure. Lie groups satisfying these assumptions are sometimes said to be of \emph{Hermitian type}.

\subsection{Holomorphic discrete series}

A Hermitian symmetric space $ G / K$ can be identified (via a holomorphic diffeomorphism) with a bounded symmetric domain $\Omega \subset \mathbb{C}^d$, i.e., a bounded domain such that
 each $w \in \Omega$ is an isolated fixed point of an involutive holomorphic diffeomorphism. Conversely, any bounded symmetric domain $\Omega \subset \mathbb{C}^d$ can be identified with a Hermitian symmetric space of the form $ G/K$, where $G$ is the identity of the group of holomorphic automorphism of $\Omega$ and $K$ an isotropy group. See, e.g., the books \cite{satake1980algebraic, helgason1978differential, hua1979harmonic} for more details and examples.

Let $\Omega \cong G/K$ be a bounded symmetric domain with normalized Euclidean measure $\mu$. The standard Bergman space $A^2 (\Omega)$ on $\Omega$ is defined as
\[
A^2 := A^2(\Omega) = \bigg\{ f \in \mathcal{O} (\Omega) : \| f \|^2_{A^2} := \int_{\Omega} |f(z)|^2 \; d\mu(z) < \infty \bigg\},
\]
where $\mathcal{O} (\Omega)$ denotes the space of holomorphic functions on $\Omega$.
The space $A^2 (\Omega)$ is a reproducing kernel Hilbert space, i.e., the point evaluation $f \mapsto f(z)$ is continuous. Let $k : \Omega \times \Omega \to \mathbb{C}$ be the reproducing kernel in $A^2$, so that
\[
f(z) = \int_{\Omega} f(w) k(z,w) \; d\mu(w), \quad f \in A^2 (\Omega), \; z \in \Omega.
\]
Denote by $d'$ the genus of $\Omega$, that is, $d' := (n + n_1) /r$ with $n$ and $n_1$ being the (complex) dimensions of $\Omega$ resp. of the maximal symmetric domain of tube-type in $\Omega$ and with $r$ being the rank of $\Omega$.
Let
\[
h(z,w) = k (z,w)^{-1/d'}, \quad z, w \in \Omega.
\]
Then $h(z,z)^{\alpha - d'}$ is integrable if, and only if, $\alpha > d' - 1$.

For $\alpha > d'- 1$, let $c_{\alpha}^{-1} := \int_{\Omega} h(z,z)^{\alpha - d'} d\mu(z) < \infty$ and $d\mu_{\alpha} (z) = c_{\alpha} h(z,z)^{\alpha - d'} d\mu(z)$. The associated weighted Bergman space is defined as
\[
 A_{\alpha}^2 := A_{\alpha}^2(\Omega) = \bigg\{ f \in \mathcal{O} (\Omega) : \| f \|^2_{A_{\alpha}^2} := \int_{\Omega} |f(z)|^2 \; d\mu_{\alpha} (z) < \infty \bigg\}.
\]
The space $A^2_{\alpha} (\Omega)$ is a reproducing kernel Hilbert space with reproducing kernel $k^{(\alpha)} (z,w) = k(z,w)^{\alpha /d'}$ satisfying
\begin{align} \label{eq:repro}
f(z) = \int_{\Omega} f(w) k^{(\alpha)} (z,w) \; d\mu_{\alpha} (w) = \langle f, k^{(\alpha)}_z \rangle,
\end{align}
where $k^{(\alpha)}_z (w) :=  \overline{k^{(\alpha)} (z,w)}$. See \cite{koranyi2000function,faraut1990function, stoll1977mean, faraut1994analysis} for more on Bergman kernels on  domains.

Denote by $J_g (w)$ the Jacobian of the action $\Omega \ni w \mapsto g \cdot w \in \Omega$. For  $\alpha > d' - 1$, define $J_g( w)^{\alpha / d'}$ for some choice of the branch of the power if $\alpha \notin \mathbb{Z}$. Then the action of $G$ on a function $f \in A^2_{\alpha} (\Omega)$ given by
\begin{align} \label{eq:projective_holomorphic}
\pi_{\alpha} (g) f (w) = J_{g^{-1}} ( w)^{\alpha / d'} f (g^{-1} \cdot w), \quad g \in G, w \in \Omega,
\end{align}
defines a projective unitary representation $\pi_{\alpha}$ of $G$ on $ A^2_{\alpha} (\Omega)$. It is called the \emph{holomorphic discrete series}.
For each $\alpha > d' -1$, the discrete series $\pi_{\alpha}$ is irreducible and square-integrable.

For details on Bergman spaces and holomorphic discrete series, see
 \cite{knapp1972bounded, koranyi2000function, neeb1999holomorphy, knapp1986representation}.

\subsection{Density conditions for Bergman kernels} \label{sec:main}
In this section we provide proofs of Theorem \ref{thm:main1_intro} and Theorem \ref{thm:main2_intro} stated in the introduction. Both results form a special case (trivial center $Z(G)$) of the following two theorems, which will be obtained from  Theorem \ref{thm:density_main1} and Theorem \ref{thm:main1_frame}.

\begin{theorem} \label{thm:main_holomorphic}
Let $\pi_{\alpha}$ be a holomorphic discrete series of $G$ on $A^2_{\alpha} (\Omega)$ for $\alpha > d'-1$.
Let $\Gamma \leq G$ be a lattice with stabilizer $\Gamma_z$ for $z \in \Omega$ and let $p_z := |\Gamma_z|^{-1} \sum_{\gamma \in \Gamma_z} \overline{u(\gamma)} \pi_{\alpha} (\gamma)$.

Then the following assertions hold:
\begin{enumerate}
 \item[(i)] If $k^{(\alpha)}_{z}$ is a cyclic vector for $\pi_{\alpha}|_{\Gamma}$, then
 $
  \vol(G / \Gamma) d_{\pi_{\alpha}} \leq |\Gamma_z|^{-1}.
 $
\item[(ii)] If $k^{(\alpha)}_{z}$ is a $p_{z}$-separating vector for $\pi_{\alpha} (\Gamma)''$, then $
  \vol(G / \Gamma) d_{\pi_{\alpha}} \geq |\Gamma_z|^{-1}.
 $
\end{enumerate}
\end{theorem}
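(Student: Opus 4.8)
The plan is to obtain Theorem~\ref{thm:main_holomorphic} as a direct specialization of Theorem~\ref{thm:density_main1}, applied to the vector $\eta = k^{(\alpha)}_z$ and the finite subgroup $\Lambda = \Gamma_z$. To this end I must verify the three hypotheses of Theorem~\ref{thm:density_main1}: that $\Gamma_z$ is a finite subgroup of $\Gamma$, that it contains the center $Z(\Gamma)$, and that $k^{(\alpha)}_z$ is a joint projective eigenvector for $\pi_{\alpha}(\Gamma_z)$, i.e. that there is $u : \Gamma_z \to \mathbb{T}$ with $\pi_{\alpha}(\gamma) k^{(\alpha)}_z = u(\gamma) k^{(\alpha)}_z$. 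Once these are in place, the projection $p_{\Lambda}$ produced by Theorem~\ref{thm:density_main1} is exactly $p_z$, and assertions (i) and (ii) follow verbatim.

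First I would record the two group-theoretic facts. The isotropy group $\Stab_G(z)$ is compact, being a conjugate of the maximal compact $K$ under the identification $\Omega \cong G/K$; hence $\Gamma_z = \Gamma \cap \Stab_G(z)$ is a discrete subgroup of a compact group and therefore finite. For the inclusion $Z(\Gamma) \subseteq \Gamma_z$ I would use that for connected semi-simple $G$ the (finite) center satisfies $Z(G) \subseteq K$; writing the action on $\Omega \cong G/K$ as $w_0 \cdot gK = w_0 g K = g w_0 K = gK$ for $w_0 \in Z(G)$, every central element acts trivially on $\Omega$. Combined with $Z(\Gamma) = Z(G) \cap \Gamma$ (the consequence of Lemma~\ref{lem:finite_central}), this yields $Z(\Gamma) \subseteq \Gamma \cap \Stab_G(z) = \Gamma_z$, as required by Theorem~\ref{thm:density_main1}.

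The key step is the eigenvector property, which I would extract from the reproducing identity \eqref{eq:repro}. For arbitrary $f \in A^2_{\alpha}(\Omega)$ and $\gamma \in \Gamma_z$, unitarity gives $\pi_{\alpha}(\gamma)^* = \pi_{\alpha}(\gamma)^{-1} = \overline{\sigma(\gamma,\gamma^{-1})}\,\pi_{\alpha}(\gamma^{-1})$, so that by \eqref{eq:projective_holomorphic}
\[
\langle f, \pi_{\alpha}(\gamma) k^{(\alpha)}_z \rangle = \big(\pi_{\alpha}(\gamma)^* f\big)(z) = \overline{\sigma(\gamma,\gamma^{-1})}\, J_{\gamma}(z)^{\alpha/d'} f(\gamma \cdot z).
\]
Since $\gamma \cdot z = z$, the right-hand side equals $\overline{\sigma(\gamma,\gamma^{-1})}\, J_{\gamma}(z)^{\alpha/d'} \langle f, k^{(\alpha)}_z \rangle$; as $f$ is arbitrary, this forces $\pi_{\alpha}(\gamma) k^{(\alpha)}_z = u(\gamma) k^{(\alpha)}_z$ with $u(\gamma) = \sigma(\gamma,\gamma^{-1})\,\overline{J_{\gamma}(z)^{\alpha/d'}}$. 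Unitarity of $\pi_{\alpha}(\gamma)$ forces $|u(\gamma)| = 1$, so $u : \Gamma_z \to \mathbb{T}$; moreover for $\gamma \in Z(\Gamma)$ Schur's lemma gives $\pi_{\alpha}(\gamma) = u(\gamma)\,\id_{\Hpi}$, so $u$ restricts to the central character there, consistent with the hypotheses of Theorem~\ref{thm:density_main1} and of Proposition~\ref{prop:cdim_semisimple}.

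Finally, with $\Lambda = \Gamma_z$ and this $u$, the projection $p_{\Lambda} = |\Gamma_z|^{-1} \sum_{\gamma \in \Gamma_z} \overline{u(\gamma)}\, \pi_{\alpha}(\gamma)$ coincides with $p_z$, and Theorem~\ref{thm:density_main1}(i)--(ii) deliver exactly the two claimed inequalities. The only genuine content beyond this bookkeeping is the eigenvector computation above; the main subtlety to watch is tracking the cocycle factor $\sigma(\gamma,\gamma^{-1})$ together with the chosen branch of $J_{\gamma}(z)^{\alpha/d'}$ when $\alpha/d' \notin \mathbb{Z}$, but since only the conclusion $u(\gamma) \in \mathbb{T}$ is needed, their explicit form is immaterial and causes no difficulty.
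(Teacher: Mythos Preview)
Your proposal is correct and follows essentially the same route as the paper: apply Theorem~\ref{thm:density_main1} with $\eta = k^{(\alpha)}_z$ and $\Lambda = \Gamma_z$, verify finiteness of $\Gamma_z$ via compactness of the isotropy, the inclusion $Z(\Gamma) \subseteq \Gamma_z$ via $Z(\Gamma) = Z(G)\cap\Gamma \subseteq K$, and the eigenvector identity $\pi_{\alpha}(\gamma)k^{(\alpha)}_z = u(\gamma)k^{(\alpha)}_z$ with $u(\gamma) = \sigma(\gamma,\gamma^{-1})\,\overline{J_{\gamma}(z)^{\alpha/d'}}$ from the reproducing property. The paper's proof does exactly this computation (equation~\eqref{Eqn=Fixed}) and invokes the same lemmata.
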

\begin{proof}
The assertions will be shown by applying Theorem \ref{thm:density_main1} to $\eta = k_{z}^{(\alpha)}$ and $\Lambda = \Gamma_z$.

Since $Z(G)$ is finite and $K$ is a maximal compact subgroup of $G$,
it follows that $Z(G) \subseteq K$.   In addition, since $\Omega \cong G/K$, the stabilizer $\Gamma_z = x_0 K x_0^{-1} \cap \Gamma$ is finite as $ x_0 K x_0^{-1}$ is compact and $\Gamma$ is discrete,
where $z = x_0 K$. In addition, note that Lemma \ref{lem:finite_central} yields that $Z(\Gamma) = Z(G) \cap \Gamma$, and thus $Z(\Gamma) \subseteq Z(G) \subseteq K$, so that $Z(\Gamma) \subseteq \Gamma_z$.

 It remains to show that $\pi_{\alpha} (\gamma) k_z^{(\alpha)} = u(\gamma) k_z^{(\alpha)}$ for some $u : \Gamma_z \to \mathbb{T}$. For this, let $f \in A_\alpha^2(\Omega)$ and $\gamma \in \Gamma$. Then a direct calculation entails
\begin{align*}
 \langle f, \pi_\alpha(\gamma)   k^{(\alpha)}_{z} \rangle
 &=  \overline{ \sigma_\alpha(\gamma, \gamma^{-1}) }  \langle  \pi_\alpha( \gamma^{-1})  f,   k^{(\alpha)}_{z}   \rangle
=      \overline{ \sigma_\alpha(\gamma, \gamma^{-1}) }  ( \pi_\alpha(    \gamma^{-1}   )   f)  (z)  \\
&=     \overline{ \sigma_\alpha(\gamma, \gamma^{-1}) }   J_{\gamma}( z)^{\alpha / d'}  f ( \gamma \cdot z)
=    \overline{ \sigma_\alpha(\gamma, \gamma^{-1}) }   J_{\gamma}( z)^{\alpha / d'} \langle f,   k^{(\alpha)}_{\gamma \cdot z}   \rangle. \numberthis \label{Eqn=Fixed}
\end{align*}
Hence,
$
\pi_\alpha(\gamma)   k^{(\alpha)}_{z} = u(\gamma) k^{(\alpha)}_{z}$ with $u(\gamma) =   \sigma_\alpha(\gamma, \gamma^{-1})  \overline{  J_{\gamma}( z)^{\alpha / d'}}$  for $ \gamma \in \Gamma_z$.
\end{proof}

\begin{theorem}
Under the same assumptions and notation as Theorem \ref{thm:main_holomorphic}, let $\Lambda_z$ be a set of representatives of $\Gamma / \Gamma_z$.
\begin{enumerate}
 \item[(i)] If $(\pi_{\alpha} (\gamma) k^{(\alpha)}_{z})_{\gamma \in \Gamma}$ is a frame for $A^2_{\alpha} (\Omega)$, then
 $
  \vol(G / \Gamma) d_{\pi_{\alpha}} \leq |\Gamma_z|^{-1}.
 $
\item[(ii)] If  $(\pi_{\alpha} (\gamma) k^{(\alpha)}_{z})_{\gamma \in \Lambda_z}$ is a Riesz sequence in $A^2_{\alpha} (\Omega)$, then $
  \vol(G / \Gamma) d_{\pi_{\alpha}} \geq |\Gamma_z|^{-1}.
 $
\end{enumerate}
\end{theorem}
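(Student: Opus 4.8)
The plan is to obtain both assertions as direct specializations of Theorem \ref{thm:main1_frame}, applied to the vector $\eta = k^{(\alpha)}_z$ and the finite subgroup $\Lambda = \Gamma_z$, with $X = \Lambda_z$. Indeed, the three structural hypotheses required to invoke Theorem \ref{thm:main1_frame} (equivalently, those of Theorem \ref{thm:density_main1}) have already been established in the proof of Theorem \ref{thm:main_holomorphic}: the stabilizer $\Gamma_z$ is finite, it contains $Z(\Gamma) = Z(G) \cap \Gamma$, and there is a function $u : \Gamma_z \to \mathbb{T}$, namely $u(\gamma) = \sigma_{\alpha}(\gamma, \gamma^{-1}) \overline{J_{\gamma}(z)^{\alpha/d'}}$, with $\pi_{\alpha}(\gamma) k^{(\alpha)}_z = u(\gamma) k^{(\alpha)}_z$ for all $\gamma \in \Gamma_z$. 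Hence $\Lambda = \Gamma_z$ is an admissible choice and the associated projection is precisely $p_{\Lambda} = p_z$.

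For assertion (i), a frame $(\pi_{\alpha}(\gamma) k^{(\alpha)}_z)_{\gamma \in \Gamma}$ for $A^2_{\alpha}(\Omega)$ is in particular complete, so $k^{(\alpha)}_z$ is cyclic for $\pi_{\alpha}|_{\Gamma}$; Theorem \ref{thm:main1_frame}(i) with $\Lambda = \Gamma_z$ then yields $\vol(G/\Gamma) d_{\pi_{\alpha}} \leq |\Gamma_z|^{-1}$. For assertion (ii), since $\Lambda_z$ is a set of representatives of $\Gamma / \Gamma_z$ it plays the role of the set $X$ in Theorem \ref{thm:main1_frame} for $\Lambda = \Gamma_z$, and the Riesz hypothesis on $(\pi_{\alpha}(\gamma) k^{(\alpha)}_z)_{\gamma \in \Lambda_z}$ feeds verbatim into Theorem \ref{thm:main1_frame}(ii), producing $\vol(G/\Gamma) d_{\pi_{\alpha}} \geq |\Gamma_z|^{-1}$. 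Internally, this passes through Proposition \ref{prop:riesz_separating}: the linear independence forced by the Riesz property identifies the projective stabilizer $\Gamma_{[k^{(\alpha)}_z]}$ with $\Gamma_z$, after which $k^{(\alpha)}_z$ is seen to be $p_z$-separating and Theorem \ref{thm:main_holomorphic}(ii) applies.

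The only substantive point, which I expect to be the main (albeit modest) obstacle, is the matching of the geometric stabilizer $\Gamma_z$ appearing in the statement with the projective stabilizer $\Gamma_{[k^{(\alpha)}_z]}$ appearing in the abstract theory. The inclusion $\Gamma_z \subseteq \Gamma_{[k^{(\alpha)}_z]}$ is immediate from the eigenvalue relation above. For the reverse inclusion, the computation \eqref{Eqn=Fixed} yields $\pi_{\alpha}(\gamma) k^{(\alpha)}_z = \sigma_{\alpha}(\gamma, \gamma^{-1}) \overline{J_{\gamma}(z)^{\alpha/d'}} \, k^{(\alpha)}_{\gamma \cdot z}$, so $\gamma \in \Gamma_{[k^{(\alpha)}_z]}$ forces $k^{(\alpha)}_{\gamma \cdot z} \in \mathbb{C} k^{(\alpha)}_z$; testing against the constant function and the (bounded, hence square-integrable) coordinate functions, which separate the points of $\Omega$, then gives $\gamma \cdot z = z$, i.e. $\gamma \in \Gamma_z$. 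Along the route above this identification is in fact delivered automatically by the linear-independence clause in the proof of Theorem \ref{thm:main1_frame}(ii), so nothing beyond recording $\Gamma_z \subseteq \Gamma_{[k^{(\alpha)}_z]}$ is strictly needed.
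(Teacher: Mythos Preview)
Your proposal is correct and follows essentially the same route as the paper: verify that $\eta = k^{(\alpha)}_z$ and $\Lambda = \Gamma_z$ satisfy the hypotheses of Theorem~\ref{thm:density_main1} (as done in the proof of Theorem~\ref{thm:main_holomorphic}), and then invoke Theorem~\ref{thm:main1_frame} directly. Your additional remarks on the identification $\Gamma_z = \Gamma_{[k^{(\alpha)}_z]}$ are accurate but, as you yourself note, unnecessary for the argument since the linear-independence clause in the proof of Theorem~\ref{thm:main1_frame}(ii) already forces $\Lambda = \Gamma_{[\eta]}$.
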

\begin{proof}
As shown in the proof of Theorem \ref{thm:main_holomorphic}, the vector $\eta = k_z^{(\alpha)}$ and subgroup $\Lambda = \Gamma_z$ satisfy the hypotheses of Theorem \ref{thm:density_main1}. Hence, assertions (i) and (ii) follow from Theorem \ref{thm:main1_frame}.
\end{proof}

\section*{Acknowledgements}
 M.C. is supported by the NWO Vidi grant `Non-commutative harmonic analysis and rigidity of operator algebras', VI.Vidi.192.018. J.v.V. gratefully acknowledges support from the Austrian Science Fund (FWF) project J-4445.

\end{document}